\documentclass{article}
\usepackage{amsmath,amssymb,amsfonts,amsthm,mathrsfs,color,graphics}
\usepackage[all]{xy}
\usepackage[dvips]{graphicx}
\usepackage{tikz} 
\usepackage[a4paper]{geometry}
\usepackage{textcomp}
\newtheorem{prop}{Proposition}[section]

\newtheorem*{theorem*}{Theorem}
\newtheorem{theorem}{Theorem}

\theoremstyle{remark}
\newtheorem{remark}[prop]{{Remark}}
\theoremstyle{remark}
\newtheorem{example}[prop]{Example}
\theoremstyle{definition}
\newtheorem{definition}[prop]{Definition}

\newtheorem{lemma}[prop]{Lemma}

\newcounter{exercise}[section]

\newcommand{\mb}{\mathbb}

\newcommand{\ra}{\rightarrow}

\newcommand{\hra}{\hookrightarrow}

\newcommand{\te}[1]{\text{\textnormal{#1}}}
\newcommand{\pc}[2]{ \langle #1 \rangle_{\te{pc}}^{#2}}

\newcommand{\zr}[2]{ \langle #1 \rangle_{#2}}
\newcommand{\cpt}[1]{ \langle #1 \rangle_{\te{cpt}}}

\newcommand{\scr}[1]{\mathscr{#1}}

\newcommand{\kcirc}{{K^\circ}}
\newcommand{\ktilde}{{\widetilde{K}}}

\newcommand{\Ccal}{{\mathscr C}}
\newcommand{\Wcal}{{\mathscr W}}
\newcommand{\Kcal}{{\mathscr K}}

\newcommand{\Xcal}{{\mathscr X}}
\newcommand{\Ical}{{\mathscr I}}
\newcommand{\Jcal}{{\mathscr J}}
\newcommand{\Ycal}{{\mathscr Y}}
\newcommand{\Zcal}{{\mathscr Z}}
\newcommand{\Ucal}{{\mathscr U}}

\newcommand{\Vcal}{{\mathscr V}}

\newcommand{\Spec}{{\rm Spec}}

\newcommand{\Ocal}{{\mathscr O}}

\newcommand{\tdop}{{\mathbb T}}

\newcommand{\pdop}{{\mathbb P}}

\newcommand{\cdop}{{\mathbb C}}

\newcommand{\qdop}{{\mathbb Q}}

\newcommand{\rdop}{{\mathbb R}}

\newcommand{\adop}{{\mathbb A}}
\newcommand{\zdop}{{\mathbb Z}}

\newcommand{\y}{{\bf{y}}}

\begin{document}
\title{Nagata's compactification theorem for normal toric varieties \\ over a valuation ring of rank one}
\author{Alejandro Soto}
\maketitle

\begin{abstract}
We prove, using invariant Zariski--Riemann spaces, that every normal toric variety over a valuation ring of rank one can be embedded as an open dense subset into a proper toric variety equivariantly. This extends a well known theorem of Sumihiro for toric varieties over a field to this more general setting. 
\end{abstract}

\section{Introduction}

Toric geometry has been an important subject in algebraic geometry since its very beginnings, one of the reasons being the fact that its combinatorial aspects allow very concrete geometric manipulations. This leads to many explicit examples and constructions in algebraic geometry.

As every normal toric variety over a field is constructed from a fan, many geometric properties can be understood in  combinatorial terms. For instance, a normal toric variety over a field is proper if and only if the associated fan is complete. Furthermore, by modifying the fan we can obtain a modification of the given variety. One of the most important examples of this phenomenon is the normalized blow up of a toric variety along a center that is invariant under the action of the torus.  

Toric schemes over arbitrary valuation rings of rank one have been introduced by Gubler in \cite{gubler12} in order to generalize tropical compactifications of closed subvarieties of the torus $\mb{G}^n_{\te{m},K}$ over an arbitrary valued field $K$ of rank one. Those schemes generalize the toric schemes over discrete valuation rings studied by Mumford in the 70's, see \cite[Ch. IV \S 3]{mumford73}. In \cite{gubler_soto13}, Gubler and the author have generalized the classification of normal toric varieties over fields given by rational fans to the setting of normal toric schemes of finite type over an arbitrary valuation ring of rank one. The classification is given in terms of certain admissible fans in $\mb{R}^n\times \mb{R}_+$, where the extra factor  $\mb{R}_+$ takes into account the valuation of the ground ring. See \S 2 and \cite{gubler_soto13} for details.

The combinatorial aspects of these toric schemes extend in a natural way the classical theory over a field. To be more precise let us fix a rank one valued field $K$ with valuation ring $\kcirc$ and consider the split torus $\tdop:=\mb{G}_{\te{m},\kcirc}^n$ over $\kcirc$. A $\tdop$-toric variety $\Ycal$ over $\kcirc$ is a flat integral separated scheme of finite type over $\kcirc$ such that its generic fiber contains $T:=\mb{G}_{\te{m},K}^n$ as an open dense subset and the multiplication action of $T$ on itself extends to an algebraic action of  $\tdop$ on $\Ycal$ over $\kcirc$.  Suppose that $\Ycal$ is an affine normal $\tdop$-toric variety over $\kcirc$ and let  $\sigma \subset \mb{R}^n\times \mb{R}_+$ be its corresponding admissible cone. The generic fiber $\Ycal_\eta$ is a $T$-toric variety over $K$ described by the recession cone of the polyhedron $\sigma_1:=\{w\in \rdop^n|(w,1)\in \sigma\}$ and the torus orbits in the special fiber correspond to the vertices of  $\sigma_1$. In this way, we obtain a complete description of the torus orbits in the generic and the special fiber in terms of the structure of the admissible fan. 

It is natural to ask which other properties of toric varieties can be extended to  the setting of $\tdop$-toric varieties over $\kcirc$. One of the main difficulties when trying to generalize them is the fact that, unless the valuation is discrete, we are working in a non-noetherian setting. Hence many standard results in algebraic geometry cannot be applied immediately. Regardless of the absence of the noetherian condition, the underlying topological space of a $\tdop$-toric variety is a noetherian topological space. Furthermore, the generic fiber is a $T$-toric variety over $K$ and the special fiber is a separated scheme of finite type over the residue field $\ktilde$. The associated reduced scheme of every irreducible component of the latter is a toric variety over $\ktilde$, see \S 2.   

In this paper, we address the question of whether a normal $\tdop$-toric variety over $\kcirc$ can be embedded into a proper $\tdop$-toric variety over $\kcirc$. Our  main result answers this question affirmatively. This generalizes a well known theorem of Sumihiro on the equivariant completion of normal toric varieties, see \cite{sumihiro1} and \cite{sumihiro2}. More precisely, we have the following statement.

\begin{theorem}\label{theorem1}
Let $\Ycal$ be a normal $\tdop$-toric variety over the valuation ring $\kcirc$. Then there exists an equivariant open immersion $\Ycal \hra \Ycal_{\te{cpt}}$ into a proper $\tdop$-toric variety $\Ycal_{\te{cpt}}$ over $\kcirc$.
\end{theorem}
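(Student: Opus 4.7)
The strategy I would follow is to combine Nagata's compactification theorem in its non-Noetherian form with the combinatorial classification of $\tdop$-toric varieties over $\kcirc$ by admissible fans from \cite{gubler_soto13}, bridging the two via invariant Zariski--Riemann spaces. The plan is: first, obtain a not-necessarily-equivariant proper compactification $\Ycal \hra \Ycal_1$; second, extend the torus action to the Zariski--Riemann space associated to $\Ycal_1$; third, descend this action to an honest equivariant proper modification; and finally, normalize to produce the desired toric compactification.

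Concretely, since $\Ycal$ is quasi-compact and separated over $\kcirc$, I would invoke Nagata's compactification theorem for quasi-compact quasi-separated morphisms (available over non-Noetherian bases thanks to the work of Conrad and of L\"utkebohmert--Temkin) to obtain $\Ycal \hra \Ycal_1$ with $\Ycal_1$ proper over $\kcirc$. I would then introduce the invariant Zariski--Riemann space $\pc{\Ycal}{\Ycal_1}$, defined as the cofiltered inverse limit of all proper modifications $\Ycal' \to \Ycal_1$ that are isomorphisms over $\Ycal$. The $\tdop$-action on $\Ycal$ extends to this Zariski--Riemann space by functoriality: for each such $\Ycal'$, the action composed with the inclusion $\Ycal \hra \Ycal'$ defines a rational map from $\tdop \times \Ycal'$ to $\Ycal'$, whose schematic graph closure, together with the valuative criterion of properness, yields an action on the inverse limit. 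Using the noetherianity of the underlying topological space of $\Ycal$ (emphasized in the introduction) and the quasi-compactness of the Zariski--Riemann space, I would then locate a $\tdop$-invariant quasi-compact open subset containing the image of $\Ycal$ that descends to a genuine equivariant proper modification $\Ycal_2$ of $\Ycal_1$. Normalizing $\Ycal_2$ produces a normal proper $\kcirc$-scheme on which $\tdop$ acts and in which $\Ycal$ is a dense equivariant open (the open immersion survives normalization since $\Ycal$ is already normal); by the classification in \cite{gubler_soto13}, this must be a proper normal $\tdop$-toric variety whose admissible fan contains the fan of $\Ycal$ as a subfan.

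The main obstacle I anticipate is the descent step: passing from the $\tdop$-action on the Zariski--Riemann space, which is a non-scheme-theoretic inverse limit, back to an action on a proper $\kcirc$-scheme of finite type while preserving invariance. This requires an equivariant refinement of the standard approximation of valuations by proper models, together with a careful use of the valuative criterion and of the topological noetherianity of $\Ycal$. A secondary concern is that $\kcirc$ is not Noetherian and typically not excellent, so the finite-generation properties of normalization are not automatic; however, the combinatorial classification of \cite{gubler_soto13} allows one to recognize the output directly in terms of a complete admissible fan, bypassing this technical subtlety once an equivariant proper model has been constructed.
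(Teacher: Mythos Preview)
Your approach differs substantially from the paper's, and has two genuine gaps.

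The paper does \emph{not} first compactify non-equivariantly and then try to force the torus action onto the result. Instead it works equivariantly from the very beginning: each affine $\tdop$-invariant chart $\Ucal_i$ of $\Ycal$ is compactified as the closure in a projective space on which $\tdop$ already acts linearly, and the Zariski--Riemann spaces $\zr{\,\cdot\,}{\Ucal}$ are built using only blow ups along $\tdop$-invariant centers. Thus every scheme appearing in the inverse system is itself a $\tdop$-toric variety, and the gluing steps (Propositions~\ref{gluing} and~\ref{algebraic}) produce toric varieties directly. No descent of a group action from an inverse limit to a finite level is ever needed, and no normalization step is required.

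The first gap in your plan is precisely the descent step you flag. Even granting that the $\tdop$-action extends to the inverse limit of all proper modifications of $\Ycal_1$ that are isomorphisms over $\Ycal$ (your graph-closure argument is too vague to establish this as a morphism of locally ringed spaces rather than a set-theoretic map on points), nothing in what you write explains why the action stabilizes at a finite level. An action on a cofiltered limit does not in general descend to any term of the system; producing a $\tdop$-stable proper modification $\Ycal_2$ is the entire content of the theorem, and you have not supplied a mechanism. Appeals to topological noetherianity of $\Ycal$ are beside the point, since the difficulty lives on the boundary $\Ycal_1\setminus\Ycal$, which carries no a priori $\tdop$-structure.

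The second gap is the normalization. You propose to normalize $\Ycal_2$ and then invoke the classification of \cite{gubler_soto13} to recognize the result as toric. But that classification applies only once you already have a normal $\tdop$-toric variety of finite type over $\kcirc$; it does not tell you that the normalization of an arbitrary $\tdop$-toric variety over a rank-one valuation ring is of finite type. Over a non-noetherian, non-excellent base this is not automatic, and you cannot use the fan description to ``bypass'' the issue before you know the normalization is finitely presented. The paper's Proposition~\ref{normalization} handles exactly this for \emph{projective} toric varieties with linear torus action via an explicit semigroup computation, but that argument is not available for your abstract $\Ycal_2$.
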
 

The proof of this theorem follows the lines of the proof of Nagata's compactification theorem as presented by Fujiwara--Kato in \cite[Appendix F]{fujiwara-kato14}. The main tool used there is the Zariski--Riemann space associated to a pair $(\Ycal, \Ucal)$, where $\Ucal \subset \Ycal$ is a quasi-compact open subscheme of $\Ycal$. It is defined as 
\[ \zr{\Ycal}{\Ucal}:=\varprojlim \Ycal_i,\]
where the limit is taken over the collection of  $\Ucal$-admissible blow ups over $\Ycal$, i.e. blow ups with center disjoint from $\Ucal$. Note that we can dentify $\Ucal$ with an open subset of the Zariski--Riemann space $\zr{\Ycal}{\Ucal}$.  We remark that in the case  $\Ucal=\emptyset$, these spaces did play a key role in the first proof given by Sumihiro in \cite{sumihiro1}.

In our setting $\Ycal$ is a normal $\tdop$-toric variety over the valuation ring $\kcirc$, $\Ucal$ is an open invariant subscheme and the transition maps are equivariant, see Definition \ref{admissible} and \ref{def:zariski-riemann} below. 
In this case we get a locally ringed space endowed with an action of the torus $\tdop$. 

For toric varieties over a field, there are purely combinatorial proofs of the existence of the equivariant completion. More precisely, it has been proved that every rational fan can be completed, see for instance \cite[III. Theorem 2.8]{ewald}, \cite{ewaldishida} and \cite{rohrer11}. This gives rise to an equivariant open embedding of the original variety into a complete normal toric variety. We point out that, as the toric schemes over discrete valuation rings are described combinatorially by rational fans, these results also provide an equivariant completion in the discretely valued case. 

The structure of the paper is as follows. In section \S 2 we recall the basic definitions, constructions and examples of $\tdop$-toric varieties over rank one valuation rings. We have a new result in this section, Proposition \ref{normalization}, where we prove that the normalization of a projective $\tdop$-toric variety with a linear action of the torus can be constructed in a canonical way, extending the classical results over a field and over a discrete valuation ring.  This construction is done using the combinatorial description of the $\tdop$-toric variety given by the subdivided weighted polytope, see \S \ref{projective}. Although this result is not needed for the proof of Theorem \ref{theorem1}, we include it here for completeness of the presentation of the $\tdop$-toric varieties over $\kcirc$.

In section \S3 we consider a $\tdop$-toric variety $\Ycal$ over the valuation ring $\kcirc$ and an open invariant subscheme $\Ucal$. We  give a detailed description of the $\Ucal$-admissible blow ups. We show that they are preserved under composition and that the collection of all $\Ucal$-admissible blow ups is filtered. This allows us to define the invariant Zariski--Riemann space associated to $(\Ycal, \Ucal)$ as the projective limit over all the $\Ucal$-admissible blow ups.

 Finally in \S 4 we give a proof of our main result. We proceed as follows: we consider an open invariant affine covering $\{ \Ucal_i \}$ of our normal $\tdop$-toric variety $\Ycal$. For each open $\Ucal_i$, we take a compactification $\overline{\Ucal_i}$ and with it  we construct a locally ringed space $\pc{\Ucal_i}{\Ycal}$. It has the property that if $\Ucal_i \subset \Ucal_j$ then $\pc{\Ucal_i}{\Ycal} \subset \pc{\Ucal_j}{\Ycal}$. 
From this invariant covering, we get a collection of invariant locally ringed spaces $\{ \pc{\Ucal_i}{\Ycal}\}$. Due to the compatibilty with respect to the inclusions, we can glue these spaces along common intersections in order to get a $\tdop$-invariant locally ringed space $\cpt{\Ycal}$ called the Zariski--Riemann  compactification of $\Ycal$. By construction, we have $\Ycal \subset \langle \Ycal \rangle_{\te{cpt}}$. 
Finally in Proposition \ref{algebraic}, we show that the locally ringed space $\cpt{\Ycal}$  is algebraic in the following sense: there exists a scheme ${\Ycal}_{\te{cpt}}$ over $\kcirc$ which contains $\Ycal$ as an open and dense subset and such that the Zariski--Riemann space associated to $({\Ycal}_{\te{cpt}},{\Ycal})$ is isomorphic to $\cpt{\Ycal}$.  This scheme is in fact a proper $\tdop$-toric variety over the valuation ring $\kcirc$, which concludes the proof of the Theorem 1. 
 
\vspace{0.5cm}

{\small 
The author would like to thank to Walter Gubler for suggesting the original problem and to Lorenzo Fantini for many helpful remarks on a preliminary version of this paper. I also would like to thank to the KU Leuven  and to the Goethe Universit\"at Frankfurt am Main for the warm atmosphere and for the great working conditions. I am grateful to the referee for his comments and suggestions.
}

\begin{center}
{\it Notation}
\end{center}

For sets $A$ and $B$, the notation $A\subset B$ includes the possibility $A=B$. We let $A\backslash B$ denote the complement of $B$ in $A$. The set of non-negative numbers in $\zdop$, $\qdop$ or $\rdop$ is denoted by  $\mb{Z}_+$, $\qdop_+$ or $\mb{R}_+$ respectively. All rings and algebras are commutative with unity. For a ring $A$, the group of units is denoted by $A^\times$. A variety over a field $K$ is an irreducible and reduced scheme which is separated and of finite type over $K$. 

In the whole paper, we fix a {\it valued field} $(K,v)$ which means here that $v$ is a valuation on the field $K$  with value group $\Gamma:=v(K^\times)\subset \mb{R}$. Note that $K$ is not required to be algebraically closed or complete and that its valuation can be trivial. We have a valuation ring $K^\circ:=\{x\in K\mid v(x)\geq 0\}$ with maximal ideal $K^{\circ \circ}:=\{x\in K\mid  v(x)>0\}$ and residue field $\widetilde{K}:=K^\circ / K^{\circ \circ}$. Let $S=\Spec (\kcirc)=\{\eta, s\}$, with $\eta$ its generic point. 

We denote by $M$ a free abelian group of rank $n$ and by $N:=\te{Hom}(M, \zdop)$ its dual. For $G\subset \mb{R}$ an abelian subgroup,  we write $M_G:=M\otimes_\zdop G$ for the base change of $M$ to $G$.

\section{Toric varieties over valuation rings}

We denote by $\tdop =\te{Spec}(\kcirc[M])$ the split torus of rank $n$ over the valuation ring $\kcirc$ and by $T$ its generic fiber.  In this section, we review the main definitions and results of $\tdop$-toric varieties over valuation rings of rank one. For a more detailed exposition, we refer the reader to the papers \cite{gubler12} and \cite{gubler_soto13}. There is one new result in this section, namely Proposition \ref{normalization}, where we extend a well known result concerning the normalization of a projective $\tdop$-toric variety with a linear action of the torus, see \cite[Chapter 5 \S B]{gkz} and \cite[Proposition 2.3.8]{qu}.

\begin{definition}
 A \emph{$\tdop$-toric variety} over $K^\circ$ is an integral scheme $\Ycal$ separated flat of finite type over $K^\circ$ such that the generic fiber $\Ycal_\eta$ contains $T$ as an open dense subset and the multipication action $T\times_K T\to T$ extends to an algebraic action $\tdop \times_{K^ \circ} \Ycal \to \Ycal$ over $\kcirc$. 
\end{definition}

It follows from the definition that the generic fiber is a $T$-toric variety over $K$. The special fiber $\Ycal_s$ is a separated scheme of finite type over the residue field $\widetilde{K}$ of $K$. The induced reduced varieties associated to the irreducible components of $\Ycal_s$ are toric varieties over $\widetilde{K}$. The dense torus acting on each irreducible component may vary, see \cite[Corollary 6.15]{gubler12}. As the scheme $\Ycal$  is flat over $K^\circ$, every  component of the special fiber has the same dimension of the generic fiber. 

If the valuation is trivial, the generic and the special fibers coincide and the definition of a $\tdop$-toric variety agrees with the usual definition of a toric variety over a field. Note that as these schemes are flat and of finite type over $\kcirc$, then they are of finite presentation over $\kcirc$. This follows from \cite[Premi\`ere partie, Corollaire 3.4.7]{raynaud}.

In order to construct some examples, we consider the following $K^\circ$-algebras associated to cones in $N_\rdop \times \rdop_+$. 

\art A subset $\sigma\subset N_\rdop\times \rdop_+$ is called a \emph{$\Gamma$-admissible cone} if it can be written as
\[ \sigma = \bigcap_{\textrm{finite}} \left\{ (w, t)\in N_\rdop \times \rdop_+ | \langle m_i, w \rangle +c_it\geq 0 \right\}, \quad m_i\in M, c_i\in \Gamma, \]
and it does not contain a linear subspace of positive dimension. We denote by $\sigma_r$ the polyhedral complex induced by $\sigma$ in $N_\rdop$ at level $r$, that is $\sigma_r:=\{ w\in N_\rdop | (w,r)\in \sigma\}$. Note that $\sigma_0$ is the recession cone of the polyhedron $\sigma_1$, denoted by $\te{rec}(\sigma_1)$. 

Given a $\Gamma$-admissible cone $\sigma$, we define the following algebra over $\kcirc$
\[ K[M]^ \sigma:= \left\{ \sum a_u\chi^u \in K[M]| \langle u, w \rangle + tv(a_u)\geq 0, \forall (w, t) \in \sigma \right\} .\] 
It is a flat $\kcirc$-algebra, as it is torsion free. If the valuation is discrete it is of finite type over $\kcirc$. If the valuation is not discrete it is of finite type if the vertices of the polyhedron $\sigma_1$ are in $N_\Gamma$, see  \cite[Proposition 6.9]{gubler12}. The algebra $K[M]^\sigma$ is normal and its quotient field is equal to $K(M)$. Note that it is canonically endowed with an $M$-graduation, hence the affine scheme $\text{Spec}(K[M]^\sigma)$ has an algebraic action of the torus $\tdop$ which extends the multiplication action of $T$ on itself. If $K[M]^\sigma$ is finitely generated, it gives rise to a normal $\tdop$-toric variety $\Ycal_\sigma:=\te{Spec}(K[M]^\sigma)$ over $K^\circ$. In this case, the generic fiber $(\Ycal_\sigma)_\eta$  is the toric variety over $K$ associated to the cone $\sigma_0=\te{rec}(\sigma_1)$.  The geometry of the special fiber is controlled by the polyhedron $\sigma_1$, for instance the irreducible components of $(\Ycal_\sigma)_s$ are in bijection with the vertices of $\sigma_1$. Roughly speaking, each component is given by the local cone of a vertex. The character lattice of the torus acting on the irreducible component associated to the vertex $w_i$ is isomorphic to $M_i=\{m\in M|\langle m,w_i\rangle \in \Gamma\}$, see \cite[Corollary 6.15]{gubler12}.

When the valuation is discrete, and $\pi \in K$ is a choice of uniformizing parameter, the algebra $K[M]^\sigma$ is generated by the elements $\{ \pi^k \chi^u\}_{\{(u,k)\in I\}}$, where $I$ is a set of generators of the semigroup $\check{\sigma}\cap (M\times \zdop)$. If the valuation is not discrete, we can give a set of generators of this algebra as follows. Let us consider the set of vertices $\{w_i\}$ of the polyhedron $\sigma_1 \subset N_\rdop$ and let $\{ u_{ij}\}_j$ be the generators of the semigroup $\check{\sigma_i}\cap M$, with  $\sigma_i=\te{LC}_{w_i}(\sigma_1)$ the local cone of $\sigma_1$ at $w_i$. Then, we have that 
\[ K[M]^ \sigma=K^\circ [\alpha_{ij}\chi^{u_{ij}}],\]
where the constants $\alpha_{ij}$ satisfy the conditions $v(\alpha_{ij})+\langle u_{ij}, w_i \rangle =0$. 

\begin{remark}
It follows from \cite[Lemma 6.13]{gubler12} that if the valuation $v$ is not discrete or if the vertices of $\sigma_1$ are contained in $N_\Gamma$, the special fiber of $\Ycal_{\sigma}$ is reduced. In this case,  every irreducible component of $(\Ycal_{\sigma})_s$ is a toric variety over $\ktilde$.
\end{remark}

\begin{example} Suppose the valuation is not discrete and consider the cone $\sigma$ in $\rdop^2 \times \rdop_+$ generated by the polyhedron $\sigma_1 \times \{ 1\}$, where $\sigma_1\subset \mathbb{R}^2$ is the polytope \[\sigma_1=\te{Conv}\{(0,0),(0,\lambda), (\lambda,0)\}\subset \rdop^2, \quad \lambda >0.\]
See Figure \ref{polyhedron}. We assume $\lambda \in \Gamma$.  In this case the algebra is given by
\[ K[M]^\sigma=K^\circ [x,y,ax^{-1},ay^{-1}, ax^{-1}y, ax y^{-1}] \]
with $a\in \kcirc$ such that $v(a)=\lambda$. It is isomorphic to
\[ K^\circ[x,y,ax^{-1}y^{-1}]=K^\circ[x,y,z]/(xyz-a). \]

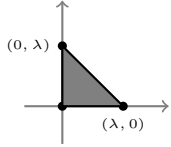
\begin{figure}[!htb]
\[
\begin{xy}
(0,0)*+{
\begin{tikzpicture}
\draw[thick,gray,->] (-.5,0) -- (1.4,0);
\draw[thick,gray,->] (0,-.5) -- (0,1.4);
\fill[black] (0,0) circle (.06cm);
\fill[black] (.8,0) circle (.06cm);
\fill[black] (0,.8) circle (.06cm);
\filldraw [gray] (0,0) -- (0,.8) -- (.80,0) -- (0,0);
\draw [thick, black] (0,0) -- (0,.8) -- (.80,0) -- (0,0);
\draw[black](0.8,0) -- node[below=1pt] {\tiny{$(\lambda,0)$}} (0.8,0);
\draw[black](0,.8) -- node[left=1pt] {\tiny{$(0,\lambda)$}} (0,.8);
\end{tikzpicture}
};
\end{xy} 
\]
\caption{Level 1 of the cone $\sigma$}
\label{polyhedron}
\end{figure}
With this algebra we get a $\tdop$-toric variety $\Ycal_\sigma=\te{Spec}(K[M]^\sigma)$ whose generic fiber is the toric surface given by \[\te{Spec}\left(K[x,y,z]/(xyz-a)\right)\simeq \mb{G}_{\te{m},K}^2\] and the special fiber is the reduced scheme of finite type over $\ktilde$ given by $\te{Spec}(\ktilde[x,y,z]/(xyz))$. Note that each irreducible component is isomorphic to $\mathbb{A}^2_\ktilde$, with its structure as a toric variety over $\ktilde$ with torus $\tdop_\ktilde=\te{Spec}(\ktilde[\zdop^2])$.  If we take $K=\cdop \{ \{ t \} \}$ to be the Puiseaux series and $a=t$, then this example gives a one parameter family of complex tori  degenerating to three copies of the complex affine plane. Note that in this case $\kcirc$ is non-noetherian.
\end{example}

\art This construction can be generalized by gluing affine $\tdop$-toric varieties. For this, we note that faces of $\Gamma$-admissible cones are again $\Gamma$-admissible cones and they give rise to open immersions. In this way given a $\Gamma$-admissible fan $\Sigma$,  i.e. a fan consisting of $\Gamma$-admissible cones, one can glue affine $\tdop$-toric varieties along the open immersions coming from the common faces. This procedure gives rise to a normal $\tdop$-toric variety $\Ycal_\Sigma$ over $\kcirc$. 

In this way we obtain, up to isomorphism, all normal $\tdop$-toric varieties over $K^\circ$. More concretely we have the following theorem which extends the well known classification of normal toric varieties over a field in terms of convex rational polyhedral fans, see  \cite[Chapter I, \S 2 Theorem 6]{mumford73}.

\begin{theorem*}
Let $\Ycal$ be a normal $\tdop$-toric variety over $\kcirc$. Then there is a $\Gamma$-admissible fan $\Sigma$ such that $\Ycal \simeq \Ycal_\Sigma$.  If the valuation is not discrete, the cones in this $\Gamma$-admissible fan satisfy an extra condition, namely the vertices of the corresponding level 1 polyhedron must have coordinates in $\Gamma$.
\end{theorem*}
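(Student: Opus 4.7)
The plan is to follow the Sumihiro–Mumford strategy: reduce to the affine case by producing a $\tdop$-invariant affine open covering, identify each invariant affine chart with a $\Gamma$-admissible cone via the $M$-graded structure of its coordinate ring, and then glue compatibly along faces.

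First I would prove a Sumihiro-type result, namely that every point of $\Ycal$ has a $\tdop$-invariant affine open neighborhood. The classical arguments use an equivariant line bundle whose invariant global sections separate a given $\tdop$-orbit from the rest; in our non-noetherian setting this requires care, but $\Ycal$ is of finite presentation over $\kcirc$, so standard spreading-out reduces to a noetherian base where the classical Sumihiro argument applies. This step reduces the theorem to the classification of \emph{affine} normal $\tdop$-toric varieties.

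Next, for an affine normal $\tdop$-toric variety $\Ucal = \Spec(A)$, the torus action endows $A$ with an $M$-grading $A = \bigoplus_{u \in M} A_u$, and the equivariant open embedding $T \hookrightarrow \Ucal$ identifies $A$ with a graded $\kcirc$-subalgebra of $K[M]$ whose fraction field is $K(M)$. Since $\kcirc$ is a valuation ring of rank one and $A$ is torsion-free, each nonzero homogeneous piece $A_u$ has the form $\{a \in K : v(a) \geq c_u\}\chi^u$ for some $c_u \in \Gamma \cup \{-\infty\}$. The map $u \mapsto c_u$ is subadditive, so
\[
\sigma := \bigl\{(w,t) \in N_\rdop \times \rdop_+ : \langle u, w \rangle + t c_u \geq 0 \text{ for all } u\bigr\}
\]
is a convex cone. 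By construction $A \subseteq K[M]^\sigma$, and equality follows because the right-hand side is the integral closure of $A$ in $K(M)$ and $A$ is normal. The finite type hypothesis on $A$ then forces only finitely many defining inequalities to be needed, giving a $\Gamma$-admissible cone $\sigma$. When the valuation is not discrete, the further constraint that the vertices of $\sigma_1$ lie in $N_\Gamma$ is forced by the finite generation of $K[M]^\sigma$ over $\kcirc$, as in \cite[Proposition 6.9]{gubler12}.

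Finally, I would glue. If $\Ucal_\sigma$ and $\Ucal_{\sigma'}$ are two invariant affine charts of $\Ycal$, their intersection is again an invariant affine $\tdop$-toric subvariety $\Ucal_\tau$, and the equivariant open immersion $\Ucal_\tau \hookrightarrow \Ucal_\sigma$ forces $\tau$ to be a face of $\sigma$ (and symmetrically of $\sigma'$), exactly as in the classical case. The resulting collection of cones assembles into a $\Gamma$-admissible fan $\Sigma$ with $\Ycal \simeq \Ycal_\Sigma$. The main obstacle I expect is the Sumihiro-type step: without noetherianity one cannot invoke standard linearization of quasi-coherent sheaves directly, and the argument requires either a careful spreading-out to a noetherian base or a direct construction of invariant affine neighborhoods using the combinatorial description of the torus orbits in the generic and special fibers of $\Ycal$.
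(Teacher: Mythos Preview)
Your three-step outline---Sumihiro-type invariant affine covering, classification of affine normal $\tdop$-toric varieties by $\Gamma$-admissible cones, and gluing into a fan---matches the paper exactly: the paper's proof is literally the citation of \cite[Theorems 1, 2, 3]{gubler_soto13}, which carry out precisely these three steps in that order.

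The one place where your sketch diverges from what is actually done, and where I would flag a genuine concern, is your proposed proof of the Sumihiro step via spreading-out. Writing $\Ycal$ and the $\tdop$-action as a base change from a noetherian subring $R\subset\kcirc$ is fine, but you would then need a Sumihiro theorem for a normal toric scheme over an arbitrary noetherian ring $R$, and no such statement is available off the shelf: classical Sumihiro is over a field, Mumford's version is over a DVR, and normality of $\Ycal$ over $\kcirc$ does not descend to normality of the model over $R$. The actual argument in \cite{gubler_soto13} does not spread out; it works directly over $\kcirc$, analyzing the $\tdop$-orbits in the generic and special fibers and producing invariant affine neighborhoods by hand---the second alternative you mention at the end. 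So your strategy is right, but the first of your two suggested routes to the Sumihiro step would not go through as stated; you would need to commit to the direct construction.

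Your treatment of the affine case (Step 2) is essentially the argument of \cite[Theorem 1]{gubler_soto13}, and your gluing step is the content of \cite[Theorem 3]{gubler_soto13}; the only point to be careful about there is that the intersection of two invariant affine charts corresponds to a \emph{common face} of the two cones, which requires a separate lemma and is not automatic from separatedness alone.
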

\begin{proof}
If $\Ycal$ is affine, this follows from \cite[Theorem 1]{gubler_soto13}. If $\Ycal$ is not affine, it follows from \cite[Theorem 2]{gubler_soto13} that every point of $\Ycal$ admits an open affine $\tdop$-invariant neighborhood corresponging to some $\Gamma$-admissible cone. Finally in \cite[Theorem 3 ]{gubler_soto13} it is shown that all these cones form a $\Gamma$-admissible fan $\Sigma$ proving the statement.
\end{proof}

\begin{remark}
The extra condition on the cones, stated in the theorem, is required in order to guarantee that the toric scheme constructed from the $\Gamma$-admissible fan $\Sigma$ is of finite type over $\kcirc$.
\end{remark}

\art \label{properness} It is well known that a normal toric variety over a field is proper if and only if the associated fan is complete, i.e. it has support $N_\rdop$. For $\tdop$-toric varieties over $\kcirc$, properness is characterized in a similar way. We say that a $\Gamma$-admissible fan $\Sigma$ is complete if its support is $N_\rdop \times \rdop_+$. A $\tdop$-toric variety $\Ycal_\Sigma$ is universally closed over $\kcirc$ if and only if $\Sigma$ is a complete $\Gamma$-admissible fan, see \cite[Proposition 11.8]{gubler12}. In this case the generic and the special fiber of $\Ycal_\Sigma$ are proper schemes over $K$ and $\ktilde$ respectively. If the valued group is discrete or divisible, completeness of the fan is equivalent to being proper over $\kcirc$. If the $\Gamma$-admissible fan $\Sigma$ is complete and consist of cones satisfying the extra condition stated in the previous theorem, then the $\tdop$-toric variety $\Ycal_\Sigma$ is proper over $\kcirc$.

\art \label{projective} Projective toric varieties over a field can be obtained by taking closures of torus orbits in projective space. In general we may end up with a non-normal toric variety, which also admits a very neat combinatorial description. Actually, from this description it is possible to obtain in a canonical way the normalization of the given variety, see \cite[Proposition 4.9]{gkz}. 
We briefly review the construction over $\kcirc$, for details see \cite[\S 9]{gubler12}. 

Let $A=(m_0,\ldots, m_N)\in M^{N+1}$ and consider the action of $T$ on a point $\y=(y_0:\cdots :y_N)\in \pdop^N_\kcirc(K)$ given by
\[ t\cdot \y:=(\chi^{m_0}(t)y_0:\cdots :\chi^{m_{N}}(t)y_N).\]
By taking the closure of $T\cdot \y$ in $\pdop^N_\kcirc$ we get a projective $\tdop$-toric variety over $\kcirc$ with a linear action of $\tdop$. It does not depend on the point $\y \in \pdop^N_\kcirc(K)$ but on the valuation of its coordinates. This information is encoded on the height function defined by $a:\{0,\ldots , N\} \to \Gamma\cup \{\infty\}$, $j\mapsto v(y_j)$. This projective toric variety is denoted by $\Ycal_{A,a}$. Every projective $\tdop$-toric variety over $\kcirc$ with a linear action of the torus is of this form, see \cite[Proposition 9.8]{gubler12}. 

The combinatorial description of a projective $\tdop$-toric variety with a linear action of the torus is given as follows. First let us consider the weight polytope in $M_\rdop$ given by 
\[ \te{Wt}(\y):=\te{Conv}(A(\y)),\]
where $A(\y):=\{m_i\in M|y_i\neq0\}$. With the height function $a$ we subdivide this polytope by projecting the faces of the convex hull of $\{(m_i,\lambda_i)\in M_\rdop \times \rdop_+|\lambda_i \geq a(i)\}$ into $M_\rdop$. This subdivided weight polytope is denoted by $\te{Wt}(\y,a)$. Dually, we get a polyhedral complex in $N_\rdop$ as the domain of linearity of the piecewise linear function $g:N_\rdop \to \rdop$, given by $w\mapsto g(w):=\te{min}\{a(i)+\langle m_i, w\rangle \}$. We denote this polyhedral complex as $\Ccal(A,a)$. It is dual to $\te{Wt}(\y,a)$ in the sense that cones of $\Ccal(A,a)$ of dimension $d$ correspond to faces of $\te{Wt}(\y,a)$ of dimension $n-d$. Explicitely, given a face $Q$ of $\te{Wt}(\y,a)$ we have the cone $\sigma_Q$ defined by 
\[ \{ w\in N_\rdop | g(w)=\langle m_i, w\rangle +a(i), i\in Q\cap A(\y)\}.\]

Dually, given a cone $\sigma \in  \Ccal(A,a)$ we get the face $Q_\sigma$ given by the convex hull of
\[ \{m_i\in A(\y)|g(w)=a(i)+\langle m_i, w \rangle , \forall w\in \sigma \}. \]

Now, we can describe the torus orbits of $\Ycal_{A,a}$ as follows. The $T$-orbits of the generic fiber $({\Ycal_{A,a}})_\eta$ are in one-to-one correspondence with the faces of the weight polytope $\te{Wt}(\y)$, hence with the cones of its normal fan. The $\tdop$-orbits of the special fiber $({\Ycal_{A,a}})_s$ are in one-to-one correspondence with the faces of the weight subdivided polytope $\te{Wt}(\y,a)$, hence with the polyhedra of the polyhedral complex $\Ccal(A,a)$.  Note that the irreducible components of the special fiber are in one-to-one correspondence with the maximal cells of the subdivided weight polytope. Let $\Sigma(A,a)$ be  the fan generated by $\Ccal(A,a)$ in $N_\rdop \times \rdop_+$. It follows from the construction that it is a $\Gamma$-admissible fan. We've seen that the $\tdop$-toric variety associated to a $\Gamma$-admissible fan is normal. Thus $\Ycal_{\Sigma(A,a)}$ is normal.

In general, the $\tdop$-toric variety $\Ycal_{A,a}$ is not normal. The relation between the $\tdop$-toric varieties $\Ycal_{\Sigma(A,a)}$ and $\Ycal_{A,a}$ is given in the following proposition. This generalizes  a result of Qu in the case of discrete valuations, see \cite[\S 2.3]{qu}.

\begin{prop}
\label{normalization}
Let $\Ycal_{A,a}\hookrightarrow \pdop^N_\kcirc$ be the projective $\tdop$-toric variety over $\kcirc$ associated to $A\in M^{N+1}$ and to a height function $a$. Then the normal $\tdop$-toric variety $\Ycal_{\Sigma(A,a)}$ associated to the $\Gamma$-admissible fan $\Sigma(A,a)$ is the normalization of $\Ycal_{A,a}$ over $\kcirc$.
\end{prop}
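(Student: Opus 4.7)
My plan is to construct a $\tdop$-equivariant finite birational morphism $\phi: \Ycal_{\Sigma(A,a)} \to \Ycal_{A,a}$. Since $\Ycal_{\Sigma(A,a)}$ is normal by the structural theorem recalled in \S 2, any such $\phi$ automatically realizes the normalization of $\Ycal_{A,a}$ over $\kcirc$.

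I build $\phi$ chart by chart. Each maximal cone of $\Sigma(A,a)$ is generated by $P_v \times \{1\}$ for a vertex $v = m_i \in A(\y)$ of $\te{Wt}(\y, a)$, where $P_v := \sigma_{\{v\}} = \{w \in N_\rdop : g(w) = \langle m_i, w \rangle + a(i)\}$ is the dual maximal cell of $\Ccal(A,a)$; I denote this cone in $N_\rdop \times \rdop_+$ by $\tau_v$, so that $\Ycal_{\tau_v} = \Spec K[M]^{\tau_v}$. On the target side, the affine open $\Ycal_{A,a} \cap \{y_i \neq 0\}$ has coordinate ring $R_i := \kcirc[(y_j/y_i)\chi^{m_j-m_i} \mid j = 0, \ldots, N]$, naturally a subring of $K[M]$. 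I check $R_i \subset K[M]^{\tau_v}$ on each generator: the defining inequality $\langle m_j - m_i, w \rangle + t(a(j) - a(i)) \geq 0$ at $(w, t) \in \tau_v$ reduces, after rescaling $(w, t) = (tw', t)$ for $t > 0$, to the statement that $i$ attains the minimum in $g$ on $P_v$, while the recession case $t = 0$ follows by taking rays. The resulting affine $\tdop$-equivariant morphisms $\phi_v: \Ycal_{\tau_v} \to \Ycal_{A,a} \cap \{y_i \neq 0\}$ all restrict to the identity on the common dense torus $T$, so they glue to a global $\tdop$-equivariant morphism $\phi$, manifestly birational.

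For finiteness I work chart by chart at the level of rings. The dual cone $\check{\tau_v}$ is the $\rdop_+$-cone generated by the outer normals $(m_j - m_i, a(j) - a(i))$ to the facets of $\tau_v$ at level $1$. Given any $\kcirc$-algebra generator $\alpha\chi^u$ of $K[M]^{\tau_v}$, so that $(u, v(\alpha)) \in \check{\tau_v}$, I write $(u, v(\alpha)) = \sum_j \lambda_j (m_j - m_i, a(j) - a(i))$ with $\lambda_j \in \rdop_{\geq 0}$ and clear denominators to obtain $q \in \zdop_{>0}$ and $p_j \in \zdop_{\geq 0}$ with $q(u, v(\alpha)) = \sum_j p_j (m_j - m_i, a(j) - a(i))$. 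Then $(\alpha\chi^u)^q$ equals a unit of $\kcirc^\times$ times $\prod_j \bigl((y_j/y_i)\chi^{m_j - m_i}\bigr)^{p_j} \in R_i$: both sides have $M$-weight $qu$, and their scalar factors have the same valuation $qv(\alpha) = \sum_j p_j(a(j) - a(i))$. This provides an integral equation of $\alpha\chi^u$ over $R_i$; combined with the finite generation of $K[M]^{\tau_v}$ as a $\kcirc$-algebra, it makes $K[M]^{\tau_v}$ finite as an $R_i$-module. Hence $\phi$ is finite and thus identifies $\Ycal_{\Sigma(A,a)}$ with the normalization of $\Ycal_{A,a}$. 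The main obstacle is the combinatorial bookkeeping in this last step, specifically the need to pass to $q$-th powers because $\check{\tau_v} \cap (M \times \Gamma)$ need not be generated as a \emph{semigroup} by the $(m_j - m_i, a(j) - a(i))$, only as a cone.
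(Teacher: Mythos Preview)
Your proof is correct and follows essentially the same approach as the paper: both arguments work on the affine charts indexed by vertices of $\te{Wt}(\y,a)$, identify the coordinate ring of $\Ycal_{A,a}\cap\{y_i\neq 0\}$ as $R_i=\kcirc[(y_j/y_i)\chi^{m_j-m_i}]$, and establish integrality of $K[M]^{\tau_v}$ over $R_i$ by showing that a suitable power of each generator lands in $R_i$. The only cosmetic difference is that the paper verifies directly that $K[M]^{\sigma_i}$ is the full integral closure of $R_i$ in $K(M)$, whereas you invoke normality of $\Ycal_{\Sigma(A,a)}$ together with the finite--birational characterization; also note that in your last display the ratio is in general an element of $\kcirc$, not necessarily a unit (the dual cone $\check\tau_v$ has the extra generator $(0,1)$), but this does not affect the conclusion since $\kcirc\subset R_i$.
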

\begin{proof}
Let $\{ u_i\}_{i=0}^N$ be the set of vertices of $\te{Wt}({\bf{y}},a)$ and let $z_0,\ldots, z_N$ be the coordinates of $\pdop^N_\kcirc$. Let $y=(y_0:\cdots :y_N)\in \mathbb{P}^N_\kcirc(K)$ be a point associated to the height function $a$, i.e. $v(y_j)=a(j)$ for $j=0, \ldots , N$. Consider $\Ucal_i\simeq \mathbb{A}^N_\kcirc$ to be the affine open subscheme in $\pdop^N_\kcirc$ given by $\{z_i\neq 0\}$. By denoting $x_k:=z_k/z_i$, we have $\Ucal_i=\te{Spec}(\kcirc[x_0,\ldots, \widehat{x_i},\ldots, x_N])$ and $\tdop=\Spec(\kcirc [M])=\Spec(\kcirc[{x}^{\pm  1}_0,\ldots, \widehat{x_i},\ldots, x_N^{\pm  1}])$. The $\tdop$-toric variety  $\Ucal_i \cap \Ycal_{A,a}$ is isomorphic to the closure of $T\cdot y^{(i)}$ in $\mathbb{A}^N_\kcirc$, with \[y^{(i)}=(y_0/y_i,\ldots, \widehat{y_i/y_i},\ldots, y_N/y_i) \in \adop^N_\kcirc (K)\] under the action \[t\cdot y^{(i)}:=(\chi^{m_0-m_i}(t)y_0/y_i,\ldots , \chi^{m_N-m_i}(t)y_N/y_i).\] This action is equivalent to the morphism
\[ K[x_0,\dots, \widehat{x_i},\ldots,x_N]\to K[M],\quad x_j\mapsto y_j/y_i\chi^{m_i-m_j}. \]
The closure of this orbit in $\mathbb{A}^N_\kcirc$ is given by the image of the induced map
\[ \kcirc[x_1,\dots,\widehat{x_i},\ldots, x_N]\to K[M],\]
which is $B_i:=\kcirc[y_1/y_i\chi^{m_1-m_i},\ldots ,y_N/y_i\chi^{m_N-m_i}]$. 

Now, recall that given a semigroup $S\subset M\times \Gamma$, the $\kcirc$-algebra $\kcirc[S]$ is normal over $\kcirc$ if and only if $S$ is saturated: it is proved in \cite[Lemma 4.1]{gubler_soto13} that normality implies saturation, the converse follows the same lines as in the proof of \cite[Lemma 1]{mumford73}. From the proof of \cite[Proposition 4.4]{gubler_soto13} we know that for any semigroup $S$, its saturation is given by $\te{cone}(S)\cap (M\times \Gamma)$. In our case $S$ is the semigroup given by $\{(m,v(b))|b\chi^m \in B_i\}$. By \cite[Lemma 4.2]{gubler_soto13}, the cone generated by $S$ is equal to the cone generated by 

\[\{(0,1),(m_j-m_i,v(y_j)-v(y_i))|j\in\{1,\ldots,\widehat{i},\ldots n\} \}\subset M\times \Gamma\]
 in $M_\rdop \times \rdop$. We have that $K^\circ[\te{cone}(S)\cap (M\times \Gamma)]$ is the same as $K[M]^{\sigma_i}$, where $\sigma_i$ is the dual cone of $\te{cone}(S)$. As this cone is $\Gamma$-admissible, it follows that $\Ucal_{\sigma_i}$ is an affine normal $\tdop$-toric variety over $\kcirc$. Let us see that this is in fact the normalization of $\Ucal_i \cap \Ycal_{A,a}$. First note that $K[M]^{\sigma_i}$ is integral over $B_i$. In fact, any element $f\in K[M]^{\sigma_i}$ can be written as

\[ f=c(y_1/y_i\chi^{m_1-m_i})^{\lambda_1}\cdots (y_1/y_i\chi^{m_N-m_i})^{\lambda_N},\]
where $\lambda_i \in \mathbb{Q}_+$ for $i=1,\ldots, N$ and $c\in K$; see \cite[Proposition 4.4]{gubler_soto13}. Then $f^k\in B_i$ for $k$ large enough. This means that $f$ is a root  of the polynomial  $T^k-f^k\in B_i[T]$. 
Now take an element $f\in \te{Frac}(B_i)=K(M)$ which is integral over $B_i$. Without loss of generality we may assume that it is of the form $b\chi^m$, for some $m\in M$ and $b\in K$. It satisfies 

\[ (b\chi^m)^n+f_{1}(b\chi^m)^{n-1}+\cdots +f_{n-1} (b\chi^m) +f_{n}=0, \quad \te{with $f_1,\ldots, f_{n}\in B_i$.}\]
It follows that $r(m, v(b))\in \te{cone}(S)\cap (M\times \Gamma)$, for some $r\in \zdop_+$. By saturation, we have that $(m,v(b))\in \te{cone}(S)\cap (M\times \Gamma)$, then $b\chi^m\in K[M]^{\sigma_i}$.

Doing this on each coordinate chart in $\mathbb{P}^N_{K^\circ}$, we get a normal $\tdop$-toric variety $\Ycal_\Sigma$ over $\kcirc$ associated to the fan $\Sigma(A,a)=\{\sigma_i| u_i \te{ vertex of } \te{Wt}({\bf{y}},a)\}$. To end the proof, we just need to show that $\Sigma_1(A,a)=\Ccal(A,a)$. But this is clear from the construction. Explicitely, we have

\begin{flalign*}
 \sigma_i &=\left\{ (\omega , t)\in N_\rdop \times \rdop | \langle m_j-m_i,\omega \rangle +t(a(j)-a(i))\geq 0, \forall j \in\{1,\ldots,\widehat{i},\ldots n\} \right\}\\ 
 &= \left\{ (\omega , t)\in N_\rdop \times \rdop |\langle m_j,\omega \rangle + ta(j)\geq \langle m_i,\omega \rangle + ta(i), \forall j \in\{1,\ldots,\widehat{i},\ldots n\}\right\}
\end{flalign*}
then, the polyhedron at level 1 is given by
\begin{flalign*}
 (\sigma_i)_1&=\left\{ \omega \in N_\rdop | \langle m_j,\omega \rangle +a(j)\geq \langle m_i,\omega \rangle + a(i), \forall j \in\{1,\ldots,\widehat{i},\ldots n\} \right\}\\ 
 &= \left\{ \omega \in N_\rdop |g(\omega)=  \langle m_i,\omega \rangle + a(i) \forall j \in\{1,\ldots,\widehat{i},\ldots n\}\right\}
\end{flalign*}
which is the corresponding polyhedron of the complex $\Ccal(A,a)$.
\end{proof}

\section{$\tdop$-invariant blow-ups and Zariski--Riemann spaces}

In this section we will review some classical results on blow ups of toric varieties along invariant closed subschemes adapted to the setting of $\tdop$-toric varieties over $\kcirc$. They are fundamental elements in the proof of Nagata's compactification theorem as can be seen in \cite{conrad07} and \cite{sumihiro2}. After proving some basic properties of admissible blow ups, we will be able to define the $\tdop$-invariant Zariski--Riemann space associated to a pair $(\Ycal, \Ucal)$, where $\Ycal$ is a $\tdop$-toric variety over $\kcirc$ and $\Ucal \subset \Ycal$ is a $\tdop$-invariant open dense subset. This locally ringed space,  canonically endowed with a $\tdop$-action, is our main tool for proving Theorem 1.

We recall that the blow up of a scheme $\Ycal$ over $\kcirc$ with center on a closed subscheme $\Zcal$ with coherent ideal sheaf $\mathscr{I}$ is given by the projective morphism $\te{Bl}_\Zcal (\Ycal):= \te{Proj}(\bigoplus \mathscr{I}^n ) \xrightarrow{\beta} \Ycal$. Note that it is of finite type as the ideal sheaf is coherent. 

\begin{prop}\label{blow-up is toric}
Let $\Ycal$ be a $\tdop$-toric variety over $K^\circ$ and let $\Zcal \subset \Ycal$ be a $\tdop$-invariant closed subscheme of finite type over $K^\circ$ with ideal sheaf $\mathscr{I}$. Then the blow up of $\Ycal$ with center $\Zcal$ is a $\tdop$-toric variety over $K^\circ$. 
\end{prop}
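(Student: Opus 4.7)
The goal is to verify the six defining conditions for $\te{Bl}_\Zcal(\Ycal)\xrightarrow{\beta}\Ycal$ to be a $\tdop$-toric variety over $\kcirc$: integrality, separatedness, flatness, finite type, density of $T$ in the generic fiber, and extension of the multiplication action of $T$ to an algebraic $\tdop$-action on the blow up. The natural strategy is to derive the first four from standard properties of blow ups along coherent ideal sheaves, the density from the fact that $\beta$ is birational, and the equivariance from the universal property of the blow up combined with the $\tdop$-invariance of $\mathscr{I}$.

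First I would lift the action. Since $\Zcal$ is $\tdop$-invariant, the ideal sheaf $\mathscr{I}$ is stable under $\mu:\tdop\times_\kcirc \Ycal\to\Ycal$, in the sense that $\mu^{-1}\mathscr{I}\cdot \mathscr{O}_{\tdop\times \Ycal}=\te{pr}_2^{-1}\mathscr{I}\cdot\mathscr{O}_{\tdop\times\Ycal}$. Pulling back via $\te{id}\times\beta:\tdop\times_\kcirc\te{Bl}_\Zcal(\Ycal)\to \tdop\times_\kcirc\Ycal$, this ideal becomes invertible, so the universal property of the blow up produces a unique morphism $\tilde\mu:\tdop\times_\kcirc \te{Bl}_\Zcal(\Ycal)\to \te{Bl}_\Zcal(\Ycal)$ lifting $\mu$. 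Uniqueness of such lifts gives the associativity and unit axioms for free, so $\tilde\mu$ is an algebraic action and $\beta$ is $\tdop$-equivariant.

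For the remaining properties, $\beta$ is projective (as $\mathscr{I}$ is coherent), hence of finite type and separated; composing with the finite type, separated structure morphism $\Ycal\to\Spec(\kcirc)$ gives finite type and separated over $\kcirc$. Since $\Ycal$ is integral and $\mathscr{I}\neq 0$ (we may assume $\Zcal\neq \Ycal$), the Rees algebra $\bigoplus_{n\geq 0}\mathscr{I}^n$ is a graded $\mathscr{O}_\Ycal$-subalgebra of $\mathscr{O}_\Ycal[t]$ and is locally a domain, so $\te{Bl}_\Zcal(\Ycal)=\te{Proj}\bigl(\bigoplus \mathscr{I}^n\bigr)$ is integral; equivalently, $\beta$ is an isomorphism over the dense open $\Ycal\setminus\Zcal$ whose preimage is therefore an irreducible, reduced dense open in $\te{Bl}_\Zcal(\Ycal)$. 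Because $\Ycal$ is flat, $\Zcal_\eta$ is a proper closed subset of $\Ycal_\eta$; its $T$-invariance forces $T\cap \Zcal_\eta=\emptyset$, so $T$ sits in $\Ycal\setminus\Zcal$ and embeds as an open dense subset of the generic fiber of the blow up, compatibly with $\tilde\mu|_{T\times T}=$ multiplication. Finally, flatness over $\kcirc$ follows at once: every stalk of $\te{Bl}_\Zcal(\Ycal)$ is a domain containing $\kcirc$ (via the injection $\mathscr{O}_{\Ycal,\beta(p)}\hookrightarrow \mathscr{O}_{\te{Bl}_\Zcal(\Ycal),p}$ inside the common function field $K(\Ycal)=K(\te{Bl}_\Zcal(\Ycal))$), hence torsion-free, hence flat over the valuation ring $\kcirc$.

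The main obstacle is conceptual rather than technical: one has to express $\tdop$-invariance of $\Zcal$ at the level of ideals carefully enough to invoke the universal property of blow ups over $\kcirc$, since we are not over a noetherian base. Once this equivariance of $\mathscr{I}$ is unpacked, the universal property supplies the $\tdop$-action without any appeal to noetherian hypotheses, and all the other properties are formal consequences of $\beta$ being projective and birational together with integrality and flatness of $\Ycal$ over $\kcirc$.
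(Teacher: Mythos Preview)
Your proof is correct and more detailed than the paper's, but the approaches to the key step differ. The paper obtains the $\tdop$-action on $\te{Bl}_\Zcal(\Ycal)$ directly from the algebra: since $\Zcal$ is $\tdop$-invariant the ideal sheaf $\mathscr{I}$ carries an $M$-graduation, hence so does the Rees algebra $\bigoplus_n \mathscr{I}^n$, and this bigrading immediately yields a $\tdop$-action on the $\te{Proj}$. You instead invoke the universal property of the blow up, pulling back $\mathscr{I}$ along $\mu\circ(\id\times\beta)$ to an invertible ideal and deducing a unique lift $\tilde\mu$; the action axioms then follow from uniqueness. Both arguments are standard and equivalent in strength; the $M$-grading route is quicker and closer to the combinatorial spirit of toric geometry, while your universal-property argument is more robust in that it works verbatim for any group scheme action on any base and makes the equivariance of $\beta$ transparent. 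You also verify integrality, separatedness, and the density of $T$ explicitly, whereas the paper leaves these implicit, and your treatment of flatness (torsion-free over a valuation ring) matches the paper's exactly.
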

\begin{proof}
We have $\te{Bl}_{\Zcal}(\Ycal)=\te{Proj}(\bigoplus \mathscr{I}^n)$ over $\Ycal$.  Since $\Zcal$ is $\tdop$-invariant, its ideal sheaf $\mathscr{I}$ has an $M$-graduation. Then, clearly so does $\bigoplus \mathscr{I}^n$ and hence $\te{Bl}_{\Zcal}(\Ycal)$ has a $\tdop$-action over $\kcirc$ extending the multiplication action of $T$ on itself. It is flat over $\kcirc$ as it is torsion free and the finite type property follows from the fact that $\Zcal$ is of finite type over $K^ \circ$.
\end{proof}

\begin{remark}
It is well known that, in general, the blow up of a normal toric variety along a closed invariant center is not necessarily normal, e.g. the blow up of  $\mb{A}^2_K$ along $(x^2, y^2)$ is not normal. As we don't require normality in the definition of $\tdop$-toric varieties, the previous proposition shows that  we remain in the category of $\tdop$-toric varieties after performing a blow up along an invariant center.  
\end{remark}

\art Note that given a $\tdop$-invariant open subset $\Ucal$ in a $\tdop$-toric variety $\Ycal$ over $\kcirc$, there is a coherent $\tdop$-invariant ideal sheaf $\Ical$ such that $\te{V}(\Ical)=\Ycal	 \backslash \Ucal$. It is clear that the ideal sheaf $\Ical$ associated to the closed subset $\Ycal \backslash \Ucal$ is $\tdop$-invariant, we just need to see that it is of finite type. Since the problem is local, we may assume that $\Ycal=\te{Spec}(A)$ and that $\Ical$ is the ideal sheaf associated to an ideal $I\subset A$. In general we have $I=\lim\limits_{\longrightarrow} I_k$, for finitely generated ideals $I_k$. By the same arguments as in \cite[Lemma 1.3]{conrad07} we see that there is one element $I_k$ big enough in this family, such that $\te{V}(I_k)=\te{V}(I)$. It is clear that $I_k$ is a $\tdop$-invariant ideal. Note that the ideal $I_k$ is not canonical, in particular there may be many of them satisfying this condition.

\art \label{strict transform} Consider a blow up $\beta:\Ycal' \to \Ycal$, with center $\Zcal$ of finite type over $\kcirc$.  If $\Xcal \hra \Ycal$ is a closed subscheme, the strict transform is defined as the scheme-theoretic closure of $ \beta^{-1}(\Xcal \backslash \Zcal)$ in $\Ycal'$ and is denoted by $\Xcal'$. Equivalently, it is the blow up of $\Xcal$ with center $\Xcal \cap \Zcal$. In order to make an explicit description of the strict transform, let us consider an affine open $\te{Spec}(A)$ of $\Ycal$ and let $I=\scr{I}(\te{Spec}(A))\subset A$. Let $f_1,\ldots, f_k$ be a set of generators of $I$. It is well known that the blow up over $\te{Spec}(A)$ is covered by the open schemes  $\te{Spec}(A[f_1/f_i,\ldots ,f_k/f_i])$, for $i=1, \ldots, k$.  If on $\te{Spec}(A)$ the  closed subscheme $\Xcal$ is defined by the ideal $J$, then the strict transform $\Xcal'$ over $\te{Spec}(A)$ is cut out locally by the ideals $J_{f_i}\cap A[f_1/f_i,\ldots ,f_k/f_i]$ in $\te{Spec}(A[f_1/f_i,\ldots ,f_k/f_i])$, where $J_{f_i}$ is the image of the ideal $J$ in $A_{f_i}$ under the canonical map $A\to A_{f_i}$. From this local description, we see that if the blow up $\beta:\Ycal' \to \Ycal$ and the closed subscheme $\Xcal$  are compatible with the $\tdop$-action, then the strict transform  and the induced morphism $\Xcal' \to \Ycal'$ are $\tdop$-equivariant.

 In toric geometry, one is interested in the study of blow ups of toric varieties along closed invariant centers.  The complement of the center can be identified with an open invariant subscheme of the blow up. We will focus on blow ups which do not modify a given $\tdop$-invariant open subscheme. 
 
\begin{definition}\label{admissible}
 Let $\Ucal \subset \Ycal$ be a $\tdop$-invariant. 
 A blow up of $\Ycal$ along a closed  $\tdop$-invariant center contained in $\Ycal \backslash \Ucal$ is called \emph{$\Ucal$-admissible}.   
\end{definition}

\begin{remark}
This definition differs slightly from the standard notion of $\Ucal$-admissible blow up, where the center is not required to be $\tdop$-invariant. Note that the center of the blow up can be strictly contained in $\Ycal \backslash \Ucal$, therefore not every center disjoint from $\Ucal$ is necessarily $\tdop$-invariant. 
\end{remark}

In the remainder of this section, we fix a $\tdop$-toric variety $\Ycal$ over $\kcirc$ and $\Ucal \subset \Ycal$ a $\tdop$-invariant open subscheme. In what follows, we will prove some basic properties of $\Ucal$-admissible blow ups. The next proposition shows that blow ups of $\tdop$-invariant closed subschemes of a $\tdop$-toric variety can be extended to blow ups of the whole variety in a compatible way with the torus action.  More precisely, we have the following result.

\begin{prop}\label{blow up extension}
 Let $\Xcal \hra \Ycal$ be a $\tdop$-equivariant closed immersion and let $\Xcal' \to \Xcal$ be a $\Vcal$-admissible blow up, with $\Vcal=\Ucal\cap \Xcal$. Then there is a $\Ucal$-admissible blow up $\Ycal' \to \Ycal$ such that the following diagram 

 \[ \xymatrix{\Xcal' \ar[d]\ar@{^{(}->}[r]^{j'} & \Ycal' \ar[d] \\ \Xcal \ar@{^{(}->}[r]^j& \Ycal} \]
 commutes and is compatible with the torus action.
\end{prop}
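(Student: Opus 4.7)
The strategy is to construct a coherent $\tdop$-invariant ideal sheaf $\scr{I}' \subset \Ocal_\Ycal$ such that the blow up of $\Ycal$ along it is $\Ucal$-admissible and has $\Xcal'$ as the strict transform of $\Xcal$. Let $\Wcal \subset \Xcal\setminus \Vcal$ denote the $\tdop$-invariant center of $\Xcal'\to \Xcal$, cut out by a coherent $\tdop$-invariant ideal $\scr{K}\subset \Ocal_\Xcal$, and let $\scr{J}\subset \Ocal_\Ycal$ be the (in general only quasi-coherent) ideal sheaf of the closed immersion $j\colon \Xcal \hookrightarrow \Ycal$.

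First, I would produce a coherent $\tdop$-invariant ideal sheaf $\scr{I}\subset \Ocal_\Ycal$ whose image in $\Ocal_\Ycal/\scr{J}=j_*\Ocal_\Xcal$ equals (the pushforward of) $\scr{K}$. The construction is local on a $\tdop$-invariant affine open $\Spec(A)\subset \Ycal$ with $\Xcal\cap\Spec(A)=\Spec(A/J)$: the corresponding ideal $K\subset A/J$ is finitely generated and $M$-graded, so one chooses finitely many $M$-homogeneous generators of $K$ and lifts each to an $M$-homogeneous element of $A$. This yields a finitely generated $\tdop$-invariant sub-ideal of $A$ that maps onto $K$. Globalizing over a finite $\tdop$-invariant affine cover of $\Ycal$ produces the required $\scr{I}$.

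Next, I would apply the observation preceding Definition \ref{admissible} to the $\tdop$-invariant open subset $\Ycal\setminus \Xcal$ of $\Ycal$ to obtain a coherent $\tdop$-invariant ideal $\scr{J}'\subset \scr{J}$ with $V(\scr{J}')=\Xcal$ set-theoretically. Setting $\scr{I}':=\scr{I}+\scr{J}'$ gives a coherent $\tdop$-invariant ideal on $\Ycal$ with two key properties: since $\scr{J}'\subset \scr{J}$, its image in $\Ocal_\Xcal$ coincides with that of $\scr{I}$, namely $\scr{K}$; and since $\scr{I}'\supset \scr{J}'$, its vanishing locus satisfies $V(\scr{I}')\subset V(\scr{J}')=\Xcal$. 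Combining these yields $V(\scr{I}')=V(\scr{K})=\Wcal \subset \Xcal\setminus \Vcal \subset \Ycal\setminus \Ucal$, so $\Ycal':=\te{Bl}_{\scr{I}'}(\Ycal)\to \Ycal$ is a $\Ucal$-admissible blow up, and is a $\tdop$-toric variety by Proposition \ref{blow-up is toric}. By the description of strict transforms in article \ref{strict transform}, the strict transform of $\Xcal$ under $\Ycal'\to \Ycal$ is the blow up of $\Xcal$ along $\scr{I}'|_\Xcal=\scr{K}$, which is precisely $\Xcal'$; the induced morphism $j'\colon \Xcal'\hookrightarrow \Ycal'$ is $\tdop$-equivariant and the square commutes by construction.

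The main obstacle I anticipate is the globalization step in the construction of $\scr{I}$. In the non-noetherian setting $\scr{J}$ need not be coherent, and on overlaps of two $\tdop$-invariant affine opens two local lifts of $\scr{K}$ typically only agree modulo $\scr{J}$, so a naive gluing fails. The purpose of adjoining $\scr{J}'$ is precisely to absorb this ambiguity: one may equivalently work throughout modulo the coherent ideal $\scr{J}'$ rather than modulo $\scr{J}$, in which case standard qcqs gluing produces a global coherent $\tdop$-invariant ideal, and the verification that $\scr{I}'|_\Xcal = \scr{K}$ then reduces to the identity $\scr{J}'\subset \scr{J}$.
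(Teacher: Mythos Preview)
Your overall strategy coincides with the paper's (itself a gloss on Fujiwara--Kato, Prop.\ E.1.6): produce a coherent $\tdop$-invariant ideal $\scr{I}'\subset\Ocal_\Ycal$ with $\scr{I}'|_\Xcal=\scr{K}$ and support in $\Ycal\setminus\Ucal$, then identify $\Xcal'$ as the strict transform. The only bookkeeping difference is how the support is controlled: the paper first fixes a $\tdop$-invariant ideal $\Ical$ with $V(\Ical)=\Ycal\setminus\Ucal$ and $\Ical|_\Xcal\subset\scr{K}$, extends $\scr{K}/\Ical|_\Xcal$ from $V(\Ical)\cap\Xcal$ to $V(\Ical)$, and pulls back to $\Ocal_\Ycal$; you instead lift directly to $\Ycal$ and then add $\scr{J}'$.

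The genuine gap is the one you already flag, and your suggested repair does not close it. Your local homogeneous lifts $I_\alpha\subset A_\alpha$ agree on overlaps only modulo $\scr{J}$, so there is no global coherent $\scr{I}$; and since $\scr{J}'\subset\scr{J}$, ``working modulo $\scr{J}'$'' is a \emph{stronger} congruence, not a weaker one, so the ideals $I_\alpha+J'_\alpha$ still need not coincide on overlaps. The clean fix avoids gluing local lifts altogether: the preimage $\widetilde{\scr{K}}:=\ker\bigl(\Ocal_\Ycal\to j_*(\Ocal_\Xcal/\scr{K})\bigr)$ is a global quasi-coherent $\tdop$-invariant ideal with $\widetilde{\scr{K}}\supset\scr{J}$ and $\widetilde{\scr{K}}/\scr{J}\simeq\scr{K}$. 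On a qcqs scheme every quasi-coherent sheaf is the filtered union of its finite-type quasi-coherent subsheaves, and the $M$-graded analogue holds by the same proof; since $\scr{K}$ is coherent, some coherent $\tdop$-invariant sub-ideal $\scr{I}\subset\widetilde{\scr{K}}$ already surjects onto $\scr{K}$, whence $\scr{I}|_\Xcal=\scr{K}$. With this $\scr{I}$ your $\scr{I}':=\scr{I}+\scr{J}'$ has all the stated properties and the remainder of your argument goes through. This filtered-union ``limit argument'' is precisely what the paper invokes at its analogous step.
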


\begin{proof}
 From \cite[Proposition E.1.6]{fujiwara-kato14} it follows that there exists a blow up $\Ycal'\to \Ycal$ with center disjoint from $\Ucal$ such that it extends $\Xcal' \to \Xcal$ making the above diagram commutative. With our hypothesis, the same construction gives rise to a diagram that is actually $\tdop$-equivariant.   Explicitly, the construction goes as follows: let $\Jcal$ be the ideal sheaf of the  center of the $\Vcal$-admissible blow up $\Xcal' \to \Xcal$. Let $\Ical$ be a $\tdop$-invariant quasi-coherent ideal sheaf on $\Ycal$  with support $\Ycal \backslash \Ucal$ such that $\Ical_{|{\Xcal}}\subset \Jcal$.  The quotient ideal sheaf  $\Jcal/\Ical_{|\Vcal}$ in $\te{V}(\Ical)\cap \Xcal$, which is $\tdop$-invariant, can be extended to a $\tdop$-invariant sheaf $\widetilde{\Jcal / \Ical_{|\Vcal}}$ on $\te{V}(\Ical)$.  This follows from [EGA1New 6.9.2] and by a limit argument we can assume that it is of finite type. 
   
As the closed embedding $\te{V}(\Ical)\ra \Ycal$ is $\tdop$-equivariant, the push forward $\Kcal$ of $\widetilde{\Jcal / \Ical_{|_\Xcal}}$ is a $\tdop$-invariant ideal sheaf on $\Ycal$.  It is clear that the blow up with center $\Kcal$ is $\Ucal$-admissible and makes the diagram $\tdop$-equivariant.
\end{proof}

As we may expect, $\Ucal$-admissible blow ups are stable under composition. This is the content of  the following proposition.
\begin{prop}
Let $f: \Ycal' \to \Ycal$ be a $\Ucal$-admissible blow up and $g: \Ycal'' \to \Ycal'$ be a $f^{-1}(\Ucal)$-admissible blow up, then $g\circ f:\Ycal'' \to \Ycal$ is a $\Ucal$-admissible blow up.
\end{prop}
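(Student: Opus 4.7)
The plan is to realise the composition $g \circ f$ as the blow up of $\Ycal$ along a single $\tdop$-invariant closed subscheme contained in $\Ycal \setminus \Ucal$, by exhibiting an explicit coherent ideal sheaf on $\Ycal$ that produces it.

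Let $\Ical \subset \Ocal_\Ycal$ and $\Jcal \subset \Ocal_{\Ycal'}$ be the $\tdop$-invariant coherent ideal sheaves defining the centers of $f$ and $g$ respectively, so that $\te{V}(\Ical) \subset \Ycal \setminus \Ucal$ and $\te{V}(\Jcal) \subset \Ycal' \setminus f^{-1}(\Ucal)$. Since $f = \te{Bl}_{\te{V}(\Ical)}(\Ycal)$, the ideal sheaf $\Ical \cdot \Ocal_{\Ycal'}$ is invertible. Working Zariski-locally on an affine cover of $\Ycal$ where $\Ical$ is generated by finitely many sections, the explicit description of the blow up charts (as recalled in \ref{strict transform}) shows that there exists an integer $n \geq 1$ such that the product $\Jcal \cdot (\Ical \cdot \Ocal_{\Ycal'})^n$ is of the form $\widetilde{\Kcal} \cdot \Ocal_{\Ycal'}$ for some quasi-coherent ideal $\widetilde{\Kcal} \subset \Ical^n \subset \Ocal_\Ycal$. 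Applying the finite-type approximation for $\tdop$-invariant ideals discussed in the paragraph preceding Definition \ref{admissible}, we may replace $\widetilde{\Kcal}$ by a $\tdop$-invariant coherent ideal sheaf $\Kcal$ whose zero locus is contained in $\te{V}(\Ical) \cup f(\te{V}(\Jcal)) \subset \Ycal \setminus \Ucal$; $\tdop$-invariance of $\Kcal$ is immediate from that of $\Ical$ and $\Jcal$ together with the $\tdop$-equivariance of $f$.

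To conclude, one identifies $\te{Bl}_{\te{V}(\Kcal)}(\Ycal)$ with $\Ycal''$ over $\Ycal$ by means of the universal property of blow ups: on $\Ycal''$ both $\Ical \cdot \Ocal_{\Ycal''}$ and $\Jcal \cdot \Ocal_{\Ycal''}$ are invertible, hence so is $\Kcal \cdot \Ocal_{\Ycal''}$, yielding a unique factorisation $\Ycal'' \to \te{Bl}_{\te{V}(\Kcal)}(\Ycal) \to \Ycal$; conversely, the canonical map $\te{Bl}_{\te{V}(\Kcal)}(\Ycal) \to \Ycal$ makes $\Ical$ invertible, hence factors through $f$, and the resulting map to $\Ycal'$ in turn makes $\Jcal$ invertible, hence factors through $g$. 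These two factorisations are mutually inverse, so $g \circ f$ is indeed a $\Ucal$-admissible blow up. The main technical obstacle is the non-Noetherian nature of $\kcirc$: one must carefully pass from the quasi-coherent product construction to a finite-type coherent ideal while preserving $\tdop$-invariance, which is precisely achieved by the same finite-type approximation argument already used in the preliminary discussion leading to Definition \ref{admissible} and in the proof of Proposition \ref{blow up extension}.
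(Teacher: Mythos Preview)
Your overall strategy---descend the center of $g$ to a coherent $\tdop$-invariant ideal on $\Ycal$ and identify $g\circ f$ with a single blow up---is exactly the paper's approach. However, there is a genuine gap in your execution.

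The step where you ``replace $\widetilde{\Kcal}$ by a $\tdop$-invariant coherent ideal sheaf $\Kcal$'' via the finite-type approximation argument is fatal. That approximation (the paragraph before Definition~\ref{admissible}) only produces a finitely generated sub-ideal with the same \emph{support}; it does not preserve the relation $\Kcal\cdot\Ocal_{\Ycal'}=\Jcal\cdot(\Ical\,\Ocal_{\Ycal'})^n$. Once this algebraic identity is lost, neither direction of your universal-property argument survives: you can no longer conclude that $\Kcal\cdot\Ocal_{\Ycal''}$ is invertible, nor that pulling back $\Ical$ to $\te{Bl}_{\te{V}(\Kcal)}(\Ycal)$ is invertible. (Even before the approximation, the converse direction is not justified as written: invertibility of $\Kcal$ on its own blow up does not by itself force $\Ical$ to become invertible; one typically needs to blow up the product $\Ical\cdot\Kcal$ and use that blowing up a product of ideals makes each factor invertible.)

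The paper sidesteps all of this by invoking Raynaud--Gruson, Premi\`ere partie, Lemme~(5.1.4), which furnishes an ideal $\Ical''$ on $\Ycal$ with $\Ical''\Ocal_{\Ycal'}=\Ical^n{\Ical'}^m\Ocal_{\Ycal'}$ \emph{together with} the statement that $g\circ f$ is the blow up along $\Ical\Ical''$. The point you missed is that this construction is explicit enough---it amounts to clearing denominators on the finitely many affine charts of $\Ycal'$, using the finitely many generators of $\Ical$ and $\Jcal$---that when $\Ical$ and $\Jcal$ are of finite type, $\Ical''$ is automatically of finite type. No approximation is needed, and $\tdop$-invariance is immediate. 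In short: your $\widetilde{\Kcal}$, constructed carefully, is already coherent; the approximation step is both unnecessary and what breaks the proof.
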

\begin{proof}
Let $\Ical, \Ical'$ be the centre of the blow ups $f$ and $g$ respectively. From \cite[Premi\`ere partie, Lemme (5.1.4)]{raynaud} there is a quasi-coherent ideal sheaf $\Ical''\subset \Ocal_\Ycal$ such that $\Ical'' \Ocal_{\Ycal'}=\Ical^n {\Ical'}^m\Ocal_{\Ycal'}$ for some $n, m>0$. Furthermore,  $g\circ f$ is a blow up with centre $\Ical \Ical''$. As $\Ical$ and $\Ical'$ are coherent, then so is $\Ical''$. It is also clear that if $f$ and $g$ are blow ups with $\tdop$-invariant centres, then so is $g\circ f.$
\end{proof}

Given a morphism which is a local open immersion, we can use blow ups to extend it to an open immersion. 

\begin{lemma}\label{extension:immersion}
 Let $\Ycal$ be a $\tdop$-toric variety and let $\Ucal\subset \Ycal$ be an open $\tdop$-invariant subscheme. Consider a $\tdop$-equivariant morphism $f:\Zcal \to \Ycal$ of $\tdop$-toric varieties of finite type such that the induced morphism $f^{-1}(\Ucal)=\Ucal \times_{\Ycal} \Zcal \to \Ucal$ is an open immersion. Then there exists a $\Ucal$-admissible blow up $ \Ycal' \to \Ycal$ such that the induced morphism $\Zcal' \to \Ycal'$ is an open immersion, where $\Zcal'$ is the strict transform of $\Zcal$. 
\end{lemma}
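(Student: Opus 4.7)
The plan is to reduce to the non-equivariant version of this statement and then argue that the center of the resulting blow up can be chosen $\tdop$-invariant. First, since $f$ is of finite type and $f^{-1}(\Ucal) \to \Ucal$ is an open immersion (in particular flat), I would apply the non-equivariant analog of the lemma (compare \cite[Appendix F]{fujiwara-kato14}), which ultimately relies on Raynaud--Gruson flatification from \cite{raynaud}. This produces a $\Ucal$-admissible blow up $\beta_0\colon \Ycal_0 \to \Ycal$ with some coherent center $\Ical_0 \subset \Ocal_\Ycal$, supported in $\Ycal \setminus \Ucal$, such that the strict transform $\Zcal_0$ of $\Zcal$ is an open subscheme of $\Ycal_0$.

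The heart of the argument is then to show that $\Ical_0$ may be chosen $\tdop$-invariant. The flatification center of Raynaud--Gruson is built canonically from Fitting ideals of coherent sheaves extracted from $f_*\Ocal_\Zcal$. Because $f$ is $\tdop$-equivariant and $\Ucal$ is $\tdop$-invariant, these sheaves inherit compatible $\tdop$-linearizations; the Fitting-ideal construction commutes with such linearizations, and products of $M$-graded ideals are $M$-graded, so $\Ical_0$ can indeed be taken $\tdop$-invariant. With this choice, Proposition \ref{blow-up is toric} gives that $\beta_0$ is a blow up in the category of $\tdop$-toric varieties, and the discussion in paragraph \ref{strict transform} ensures that the induced map $\Zcal_0 \to \Ycal_0$ on strict transforms is automatically $\tdop$-equivariant. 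By the first step this map is the desired open immersion, so setting $\Ycal' := \Ycal_0$ and $\Zcal' := \Zcal_0$ concludes the proof.

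The main obstacle I expect is the second step, namely carefully tracking the $\tdop$-action through the Raynaud--Gruson construction. A more elementary alternative would be to enlarge $\Ical_0$ to the $M$-graded ideal sheaf generated by all homogeneous components of its sections, producing a $\tdop$-invariant coherent ideal with smaller support, and then to verify directly that the strict transform of $\Zcal$ remains an open immersion after blowing up this new center. This second route trades the appeal to functoriality of Fitting ideals for a more ad hoc verification, but avoids reopening the internals of the Raynaud--Gruson argument.
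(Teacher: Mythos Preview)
Your proposal is correct and follows essentially the same approach as the paper: invoke the non-equivariant Raynaud--Gruson result (the paper cites \cite[Premi\`ere partie, Corollaire 5.7.11]{raynaud} directly rather than going through \cite{fujiwara-kato14}) and then observe that the center may be taken $\tdop$-invariant because the construction is compatible with the torus action. The paper's own proof is considerably more terse---it simply asserts that ``everything is compatible with the action of the torus''---whereas you spell out the mechanism (functoriality of Fitting ideals under $\tdop$-linearizations) and even sketch an alternative via replacing $\Ical_0$ by the ideal generated by homogeneous components; but the underlying strategy is the same.
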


\begin{proof}

The existence of the $\Ucal$-admissible blow up giving rise to the open immersion follows from \cite[Premi\`ere partie, Corollaire 5.7.11]{raynaud}. Note that it is admissible in our sense, as everything is compatible with the action of the torus. 
From the construction of the $\Ucal$-admissible blow up, we have that the induced morphism is $\tdop$-equivariant.
\end{proof}

Now, given two $\Ucal$-admissible blow ups, there is a $\Ucal$-admissible blow up which dominates both. This shows that the collection of $\Ucal$-admissible blow ups is filtered.

\begin{prop}\label{blow-up domination}
 Let $\beta_1:\Ycal_1\to \Ycal$  and $\beta_2:\Ycal_2 \to \Ycal$ be two $\Ucal$-admissible admissible blow ups along the ideal sheaves $\Ical_1$ and $\Ical_2$ respectively. Then there is a $\Ucal$-admissible blow up $\beta:\Ycal_{12}\to \Ycal$ such that the following diagram commutes
 \[ \xymatrix{\Ycal_{12} \ar[d]^{\beta_2'} \ar[r]^{\beta_1'}&\Ycal_2 \ar[d]^{\beta_2} \\ \Ycal_1 \ar[r]^{\beta_1}&\Ycal }\]
\end{prop}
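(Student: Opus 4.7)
The plan is to realize $\Ycal_{12}$ by composing one of the given blow-ups with a further blow-up along the pullback of the other center. Concretely, set $\Jcal := \Ical_1 \Ocal_{\Ycal_2}$ and define
\[
\Ycal_{12} := \te{Bl}_{\Jcal}(\Ycal_2), \qquad \beta_1' : \Ycal_{12} \to \Ycal_2.
\]
First I would verify that $\Jcal$ is a $\tdop$-invariant coherent ideal sheaf whose support lies in $\Ycal_2 \setminus \beta_2^{-1}(\Ucal)$: $\tdop$-invariance comes from the $\tdop$-equivariance of $\beta_2$ together with the $\tdop$-invariance of $\Ical_1$, and the support statement follows since $\supp(\Ical_1)\subset \Ycal\setminus \Ucal$ implies $\supp(\Jcal)\subset \beta_2^{-1}(\Ycal\setminus\Ucal)$, while $\beta_2$ is an isomorphism over $\Ucal$.

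Next, by Proposition \ref{blow-up is toric}, $\Ycal_{12}$ is a $\tdop$-toric variety and $\beta_1'$ is a $\beta_2^{-1}(\Ucal)$-admissible blow-up. By the previous proposition on stability under composition, the composite $\beta := \beta_2 \circ \beta_1' : \Ycal_{12} \to \Ycal$ is itself a $\Ucal$-admissible blow-up.

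It then remains to produce the morphism $\beta_2' : \Ycal_{12} \to \Ycal_1$ that makes the square commute. By construction $\Ical_1 \Ocal_{\Ycal_{12}} = \Jcal \Ocal_{\Ycal_{12}}$ is an invertible ideal sheaf. The universal property of $\Ycal_1 = \te{Bl}_{\Ical_1}(\Ycal)$ therefore yields a unique $\Ycal$-morphism $\beta_2' : \Ycal_{12} \to \Ycal_1$, and unicity forces the diagram to commute. The $\tdop$-equivariance of $\beta_2'$ follows either from the $M$-graded nature of the construction or from unicity applied to the translated morphism $t \cdot \beta_2'$.

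The main points requiring care are the $\tdop$-invariance of $\Jcal$ (so that $\beta_1'$ is admissible in the sense of Definition \ref{admissible}, not merely an ordinary blow-up with center disjoint from an open set) and the verification that the support is contained in $\Ycal_2\setminus\beta_2^{-1}(\Ucal)$; everything else is a clean application of the universal property of blow-ups combined with the previously established composition stability.
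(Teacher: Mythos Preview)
Your proof is correct. The paper takes a more direct route: it sets $\Ycal_{12} := \te{Bl}_{\Ical_1\Ical_2}(\Ycal)$, the blow-up along the product ideal sheaf, and invokes a result of Fujiwara--Kato for the existence of the factorizations $\beta_1', \beta_2'$; $\Ucal$-admissibility is then immediate since $\Ical_1\Ical_2$ is $\tdop$-invariant with support contained in $\Ycal\setminus\Ucal$. Your construction---pulling $\Ical_1$ back to $\Ycal_2$, blowing up there, and then producing $\beta_2'$ via the universal property of $\te{Bl}_{\Ical_1}(\Ycal)$---yields canonically the same scheme (one has $\te{Bl}_{\Ical_1\Ocal_{\Ycal_2}}(\Ycal_2)\cong \te{Bl}_{\Ical_1\Ical_2}(\Ycal)$), but is more self-contained: it replaces the external citation by the composition-stability proposition just established together with the universal property of blow-ups. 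The paper's argument is shorter; yours makes the mechanism behind the cited fact explicit and keeps the $\tdop$-equivariance bookkeeping visible throughout.
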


\begin{proof}
 It follows from the proof of \cite[Proposition E.2.1]{fujiwara-kato14} that $\beta_2:\Ycal_{12}= \te{Bl}_{\Ical_1 \Ical_2}(\Ycal)\to \Ycal$ induces a commutative diagram as above. Since $\beta_1, \beta_2$ are $\Ucal$-admissible blow ups, it follows that $\beta_{12}$ is $\Ucal$-admissible as well.  Clearly, the induced morphisms are $\tdop$-equivariant.
\end{proof}

A $\tdop$-invariant coherent ideal sheaf $\Ical$ with $\te{V}(\Ical)\cap \Ucal =\emptyset$ induces a $\Ucal$-admissible blow up. Let $\tdop$-$\te{\bf AId}_{(\Ycal, \Ucal)}$ be the set of coherent ideal sheaves inducing $\Ucal$-admissible blow ups. 

We endow this set with an order relation as follows: we say that $\Ical \leq \Ical'$ if there is a $\Ucal$-admissible ideal sheaf $\Ical''$ such that $\Ical =\Ical'\Ical''$. That is, if the $\Ucal$-admissible blow up with centre $\Ical$ can be obtained from $\Ical'$ by modifying its centre in a precise way. Given an ideal sheaf $\Ical_1$ smaller than or equal to an ideal sheaf $\Ical_2$, it follows from Proposition \ref{blow-up domination} that we have an induced morphism $\beta_{21}: \te{Bl}_{\Ical_1}(\Ycal)\to \te{Bl}_{\Ical_2}(\Ycal)$. Since those blow ups over $\Ycal$ are $\Ucal$-admissible, we can see that the corresponding morphism $\beta_{21}$ is $\Ucal$-admissible as well by identifying $\beta^{-1}_i(\Ucal)$ with $\Ucal$, where $\beta_i:\te{Bl}_{\Ical_i}(\Ycal)\to \Ycal$, for $i=1,2$. 

We have the inverse system $\{ \Ycal_i, \beta_{ij}\}$, where $\beta_{ij}: \Ycal_i \to \Ycal_j$ are the morphisms described above. They are compatible with the corresponding $\Ucal$-admissible blow ups $\beta_i:\Ycal_i\to \Ycal$. 

\begin{definition}\label{def:zariski-riemann}
 
The \emph{$\tdop$-invariant Zariski--Riemann} space associated to the pair $(\Ycal, \Ucal)$ is given by the limit

\[ \langle \Ycal \rangle_{\Ucal}:=\varprojlim \Ycal_i,\] 
in the category of locally ringed spaces. Using the canonical projections $\pi_i : \langle \Ycal \rangle_{\Ucal} \to \Ycal_i$, the structure sheaf $\Ocal_{\zr{\Ycal}{\Ucal}}$ is $\Ocal_{\langle \Ycal \rangle_{\Ucal}}:=\varinjlim \pi_i^{-1}\Ocal_{\Ycal_i}$. Note that the stalks of this sheaf are given as the direct limits of the stalks on each space $\Ycal_i$. That is, for a point $y\in \zr{\Ycal}{\Ucal}$ we have  $\Ocal_{\zr{\Ycal}{\Ucal} ,y}=\varinjlim \Ocal_{{\Ycal_i},{y_i}}$, with $y_i=\pi_i(y)$.

\end{definition}

\begin{remark}
The inverse system $\{\Ycal_i, \beta_{ij}\}$ is compatible with the action of the torus $\tdop$ as the induced morphisms are $\tdop$-equivariant. In addition, since $\te{Hom}$ commutes with inverse limits, the space $\langle \Ycal \rangle_{\Ucal}$ is a locally ringed space endowed with a  canonical $\tdop$-action given by  $(t,(x_i))\mapsto (tx_i)$. 
\end{remark}

One of the main features of the $\tdop$-invariant Zariski--Riemann space is the following well known fact.

\begin{prop}
Keeping the notation of the definition \ref{def:zariski-riemann}, the $\tdop$-invariant Zariski--Riemann space $\zr{\Ycal}{\Ucal}$ is quasi-compact.
\end{prop}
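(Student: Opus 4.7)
The plan is to identify $\zr{\Ycal}{\Ucal}$ with a cofiltered limit of spectral (equivalently, coherent in the sense of Fujiwara--Kato) topological spaces and invoke the standard result that such a limit is itself coherent, hence quasi-compact.

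First I would check that each $\Ycal_i$ in the inverse system is a coherent (i.e., quasi-compact and quasi-separated) topological space. Since $\Ycal$ is of finite type over $\kcirc$ it is quasi-compact, and every $\Ucal$-admissible blow up $\beta_i \colon \Ycal_i \to \Ycal$ is projective (its center is a coherent $\tdop$-invariant ideal sheaf), hence of finite type. Therefore $\Ycal_i$ is quasi-compact; quasi-separatedness is inherited because $\Ycal_i$ is still a scheme of finite type over $\kcirc$. Next, for any relation $\Ical \leq \Ical'$ in $\tdop\text{-}\mathbf{AId}_{(\Ycal,\Ucal)}$, the induced morphism $\beta_{ij} \colon \Ycal_i \to \Ycal_j$ is again a morphism of schemes of finite type, and so corresponds on underlying topological spaces to a coherent map (the preimage of a quasi-compact open is quasi-compact). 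Together with Proposition \ref{blow-up domination}, which provides the needed cofinality, this realises $\{\Ycal_i,\beta_{ij}\}$ as a cofiltered inverse system of coherent topological spaces with coherent transition maps.

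At this point I would appeal to the general theorem (see \cite[Ch.~0, \S 2.2]{fujiwara-kato14}, itself a mild reformulation of Hochster's theorem on spectral spaces) that a cofiltered limit of non-empty coherent topological spaces along coherent maps is again coherent, and in particular quasi-compact. Combined with the standard fact that the underlying topological space of a cofiltered limit of schemes, taken in the category of locally ringed spaces, coincides with the inverse limit taken in the category of topological spaces, this yields the quasi-compactness of $\zr{\Ycal}{\Ucal}$.

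The main point where care is required is the passage from the inverse limit in locally ringed spaces (as defined in Definition \ref{def:zariski-riemann}) to the inverse limit of underlying topological spaces; this identification, while well known for cofiltered systems of quasi-compact quasi-separated schemes with affine or at least quasi-compact transition maps, must be made explicit in order to legitimately quote the topological inverse-limit result. Once this compatibility is recorded, the conclusion is essentially a formality.
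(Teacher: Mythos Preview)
Your proposal is correct and follows essentially the same route as the paper: both reduce to the general fact from \cite[Ch.~0, \S 2.2]{fujiwara-kato14} that a cofiltered limit of coherent sober spaces with coherent transition maps is again coherent, hence quasi-compact. The paper's proof is a one-line citation of \cite[0.2.2.10]{fujiwara-kato14} together with the observation that each $\Ycal_i$ is coherent and sober; you spell out the verification of coherence and of the transition maps more carefully, and you flag the (routine) compatibility between the limit in locally ringed spaces and in topological spaces, but the underlying argument is the same.
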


\begin{proof}
It follows from \cite[0.2.2.10]{fujiwara-kato14} by noting that every $\tdop$-toric scheme $\Ycal_i$ is a coherent and sober space. 
\end{proof}

The following notion will be important for the gluing of $\tdop$-invariant Zariski--Riemann spaces.
\begin{definition}
Let $\Ucal \hra \Ycal$ be a quasi-compact open immersion and let $\Ucal \to \Xcal$ be a morphism. Consider the induced map
\[ i:\Ucal \to \Ycal \times_{\kcirc} \Xcal\]
which is an immersion as well. The closure of $\Ucal$ in $\Ycal \times_\kcirc \Xcal$ is called \emph{the join of $\Ycal$ and $\Xcal$  along $\Ucal$} and is denoted by $\Ycal *_{\Ucal} \Xcal$. 
If $\Ycal$, $\Xcal$ are $\tdop$-toric varieties over $\kcirc$, $\Ucal$ is $\tdop$-invariant and the morphisms $\Ucal \hra \Ycal$ and $\Ucal \to \Xcal$ are $\tdop$-equivariant, then the induced map $i:\Ucal \to \Ycal \times_{\kcirc} \Xcal$ is $\tdop$-equivariant. In this case the join $\Ycal *_\Ucal \Xcal$ is a closed $\tdop$-invariant subscheme of $\Ycal \times_\kcirc \Xcal$.
\end{definition}

We have the following $\tdop$-equivariant diagram 

\begin{equation}\label{diagram blow-ups}
\begin{gathered}
 \xymatrix{ & \Ycal & \\\Ucal \ar@{^{(}->}[ur] \ar[dr] \ar^i[rr]&  & \Ycal *_\Ucal \Xcal\subset \Ycal \times_\kcirc \Xcal \ar^p[ul] \ar^q[dl] \\& \Xcal&} 
\end{gathered}
 \end{equation}
where $p$ and $q$ are the canonical projections.  We will see that after performing a $\Ucal$-admissible blow up, the morphisms in the upper triangle of the diagram become open immersions compatible with the $\tdop$-action. More precisely we have the following proposition.

\begin{prop}\label{diagram immersions}
 Consider the diagram (\ref{diagram blow-ups}) above. There exists a $\Ucal$-admissible blow up $\Ycal' \to \Ycal$ such that the strict transform $p'\colon \Zcal \to \Ycal'$ of  $\Ycal*_{\Ucal} \Xcal \xrightarrow{p}  \Ycal$ and the induced morphism $\Ucal \to \Zcal$ are $\tdop$-equivariant open immersions. If $\Xcal$ is proper over $\kcirc$, then the induced morphism $p'$ is an isomorphism.
 
If in addition the morphism $\Ucal \to \Xcal$ is an open immersion, there is a $\Ucal$-admissible blow up $\Xcal' \to \Xcal$ such that the strict transform $\Zcal' \to  \Xcal'$ of  $ \Zcal \to  \Xcal$ is an open immersion as well. Furthermore, we may assume that $\Zcal' \to \Ycal'$ is a $\tdop$-equivariant open immersion. 
\end{prop}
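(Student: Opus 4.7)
The proof will apply Lemma \ref{extension:immersion} twice: first to the projection $p$, then to $q$. Each application requires verifying that the relevant projection restricts to an isomorphism over $\Ucal$, which is where separatedness enters.

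Because $\Xcal$ is separated over $\kcirc$, the graph of $\Ucal\to\Xcal$ is closed in $\Ucal\times_\kcirc\Xcal$, hence $i(\Ucal)$ is closed in the open subscheme $p^{-1}(\Ucal)=\Ucal\times_\kcirc\Xcal$ of $\Ycal\times_\kcirc\Xcal$. It follows that
\[
p^{-1}(\Ucal)\cap(\Ycal*_\Ucal\Xcal) \;=\; \overline{i(\Ucal)}\cap p^{-1}(\Ucal) \;=\; i(\Ucal)\;\cong\;\Ucal,
\]
so $p$ restricts to an isomorphism over $\Ucal$. The join is $\tdop$-invariant (the closure of a $\tdop$-invariant subscheme) and $p$ is $\tdop$-equivariant, so Lemma \ref{extension:immersion} yields a $\Ucal$-admissible blow-up $\Ycal'\to\Ycal$ whose strict transform $p'\colon\Zcal\to\Ycal'$ is a $\tdop$-equivariant open immersion; the center being disjoint from $\Ucal$ also makes $\Ucal\hookrightarrow\Zcal$ an open immersion. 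If in addition $\Xcal$ is proper over $\kcirc$, then $p$ is proper (the join is closed in the proper morphism $\Ycal\times_\kcirc\Xcal\to\Ycal$), so $p'$ is proper; but a proper open immersion with dense image into an integral scheme is an isomorphism, which settles the proper case.

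For the second assertion, assume further that $\Ucal\hookrightarrow\Xcal$ is an open immersion. The symmetric graph argument, now using the separatedness of $\Ycal$, gives $q^{-1}(\Ucal)\cap(\Ycal*_\Ucal\Xcal)\cong\Ucal$, so $q$ restricts to an isomorphism over $\Ucal$. Applying Lemma \ref{extension:immersion} to the composite $\Zcal\hookrightarrow\Ycal*_\Ucal\Xcal\xrightarrow{q}\Xcal$ produces a $\Ucal$-admissible blow-up $\Xcal'\to\Xcal$ with strict transform $\Zcal'\hookrightarrow\Xcal'$ a $\tdop$-equivariant open immersion, and by the local description in \ref{strict transform} the induced morphism $\Zcal'\to\Zcal$ is itself a $\Ucal$-admissible blow-up with some $\tdop$-invariant center $C\subset\Zcal$ disjoint from $\Ucal$.

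To realise $\Zcal'$ as an open subscheme of a $\Ucal$-admissible blow-up of $\Ycal'$, I would extend this last blow-up across the open immersion $\Zcal\hookrightarrow\Ycal'$: the closure $\bar C$ of $C$ in $\Ycal'$ is still $\tdop$-invariant and disjoint from $\Ucal$, and a $\tdop$-invariant coherent ideal on $\Ycal'$ can be chosen whose restriction to $\Zcal$ recovers the blow-up ideal of $\Zcal'\to\Zcal$ (adjusting by a further $\Ucal$-admissible blow-up via Proposition \ref{blow-up domination} if necessary). The resulting $\Ucal$-admissible blow-up $\Ycal''\to\Ycal'$ contains $\Zcal'$ as an open subscheme; relabeling $\Ycal''$ as $\Ycal'$ gives the desired $\tdop$-equivariant open immersion $\Zcal'\hookrightarrow\Ycal'$. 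The main obstacle lies precisely in this final extension step — producing a $\Ucal$-admissible, $\tdop$-invariant blow-up of $\Ycal'$ whose restriction to the open $\Zcal$ matches the given blow-up of $\Zcal$ — while everything else reduces directly to Lemma \ref{extension:immersion} and the graph-based separatedness arguments.
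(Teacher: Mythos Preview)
Your argument is correct and follows the same route as the paper: apply Lemma~\ref{extension:immersion} to $p$, then to $q$, and finally extend the blow-up $\Zcal'\to\Zcal$ across the open immersion $\Zcal\hookrightarrow\Ycal'$. The graph/separatedness verification you supply is a detail the paper suppresses, simply citing Lemma~\ref{extension:immersion} directly.

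For the last step---the one you single out as the ``main obstacle''---the paper invokes Proposition~\ref{blow up extension}, which is literally stated for \emph{closed} immersions, so the citation is a little loose. Your direct approach is actually cleaner here: since $\Zcal\hookrightarrow\Ycal'$ is \emph{open}, one extends the $\tdop$-invariant coherent ideal sheaf of the center from $\Zcal$ to a $\tdop$-invariant coherent ideal on $\Ycal'$ (as in the argument preceding Definition~\ref{admissible}), and the blow-up of $\Ycal'$ along any such extension restricts over $\Zcal$ to precisely $\Zcal'$. No auxiliary domination via Proposition~\ref{blow-up domination} is needed, so the obstacle you flag is smaller than you fear.
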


\begin{proof}
 By the Lemma \ref{extension:immersion}, there is a $\Ucal$-admissible blow up $\Ycal' \to \Ycal$ such that the strict transform $p'\colon \Zcal \to \Ycal'$ is a $\tdop$-equivariant  open immersion. If $\Ycal$ is  proper over $\kcirc$, then the induced morphism $p'$ is proper, therefore it is an isomorphism. Now suppose that $\Ucal \to \Xcal$ is an open immersion, by the same argument there is a $\Ucal$-admissible blow up $\Xcal' \to \Xcal$ such that the strict transform $\Zcal'\to \Xcal'$ of $\Zcal \to \Xcal$ is a $\tdop$-equivariant open immersion. It follows from Proposition \ref{blow up extension}, that the $\Ucal$-admissible blow up $\Zcal' \to \Zcal$ can be extended to a $\Ucal$-admissible blow up $\Ycal'' \to \Ycal'$ such that $\Zcal' \to \Ycal''$ is an open immersion compatible with the $\tdop$-action. As $\Ucal$-admissible blow ups are stable under composition, we may assume that $\Zcal' \hra \Ycal'$. 
\end{proof}

Suppose that $\Ucal \subset \Ycal$ is compactifiable  by a proper $\tdop$-toric scheme $\overline{\Ucal}$ over $\kcirc$ and set 
\[ \langle \Ucal \rangle_{\te{cpt}} := \zr{\overline{\Ucal}}{\Ucal}. \]
It is called the \emph{canonical compactification} of $\Ucal$ over $\kcirc$. It follows from Proposition \ref{diagram immersions} that this locally ringed space is independent of the choice of an algebraic compactification of $\Ucal$.

\begin{lemma}\label{embedcan}
With the notation above, we have $\zr{\Ycal}{\Ucal} \hra \cpt{\Ucal}$ $\tdop$-equivariantly.
\end{lemma}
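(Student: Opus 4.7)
The strategy is to realize both $\zr{\Ycal}{\Ucal}$ and $\cpt{\Ucal}$ as inverse limits over compatible cofinal subsystems of $\Ucal$-admissible blow ups, in which every blow up of $\Ycal$ sits as a $\tdop$-equivariant open subscheme of the matching blow up of $\overline{\Ucal}$. Passing to the limit in the category of locally ringed spaces then produces the desired embedding.

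Concretely, for each $\Ucal$-admissible blow up $\overline{\Ucal}_i \to \overline{\Ucal}$, form the join $\Zcal_i := \Ycal *_{\Ucal} \overline{\Ucal}_i \subset \Ycal \times_{\kcirc} \overline{\Ucal}_i$. Since $\overline{\Ucal}$ is proper over $\kcirc$, so is $\overline{\Ucal}_i$, so the first part of Proposition \ref{diagram immersions} produces a $\Ucal$-admissible blow up $\Ycal_i \to \Ycal$ along which the strict transform $\Zcal_i \to \Ycal_i$ is a $\tdop$-equivariant isomorphism. Composing with the second projection $\Zcal_i \to \overline{\Ucal}_i$ then yields a $\tdop$-equivariant morphism $\Ycal_i \to \overline{\Ucal}_i$ extending the open immersion $\Ucal \hra \overline{\Ucal}_i$. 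The second part of the proposition, which applies because $\Ucal \hra \overline{\Ucal}_i$ is an open immersion, furnishes further $\Ucal$-admissible blow ups $\widetilde{\Ycal_i} \to \Ycal_i$ and $\widetilde{\overline{\Ucal}_i} \to \overline{\Ucal}_i$ together with a $\tdop$-equivariant open immersion $\widetilde{\Ycal_i} \hra \widetilde{\overline{\Ucal}_i}$ restricting to the identity on $\Ucal$.

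Next, I would organize the pairs $\{(\widetilde{\Ycal_i}, \widetilde{\overline{\Ucal}_i})\}$ into a filtered inverse system. Given two such pairs, Proposition \ref{blow-up domination} provides a $\Ucal$-admissible blow up of $\overline{\Ucal}$ dominating both $\widetilde{\overline{\Ucal}_i}$ and $\widetilde{\overline{\Ucal}_{i'}}$, and repeating the construction on it yields a compatible common refinement on the $\Ycal$-side; the transition maps are uniquely determined because $\Ucal$ is schematically dense in each $\widetilde{\Ycal}_\bullet$ and $\widetilde{\overline{\Ucal}}_\bullet$, and the targets are separated. The subsystem $\{\widetilde{\overline{\Ucal}_i}\}$ is cofinal in all $\Ucal$-admissible blow ups of $\overline{\Ucal}$: given any such blow up $\overline{\Ucal}'$, starting the construction from $\overline{\Ucal}_i = \overline{\Ucal}'$ produces a $\widetilde{\overline{\Ucal}'}$ dominating it. Symmetrically, $\{\widetilde{\Ycal_i}\}$ is cofinal in the $\Ucal$-admissible blow ups of $\Ycal$. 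Therefore $\cpt{\Ucal} \simeq \varprojlim \widetilde{\overline{\Ucal}_i}$ and $\zr{\Ycal}{\Ucal} \simeq \varprojlim \widetilde{\Ycal_i}$ as $\tdop$-equivariant locally ringed spaces.

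Taking the inverse limit of the compatible $\tdop$-equivariant open immersions $\widetilde{\Ycal_i} \hra \widetilde{\overline{\Ucal}_i}$, and noting that the torus actions are compatible at every level, gives the required $\tdop$-equivariant open immersion $\zr{\Ycal}{\Ucal} \hra \cpt{\Ucal}$. The main obstacle is the bookkeeping of compatibility and cofinality of the pair construction across the inverse system: this is ultimately controlled by the filtering property of $\Ucal$-admissible blow ups (Proposition \ref{blow-up domination}) and the uniqueness of extensions of morphisms with schematically dense domain into a separated target.
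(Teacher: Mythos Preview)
Your approach is essentially the same as the paper's: use Proposition~\ref{diagram immersions} to produce, after $\Ucal$-admissible blow ups, a $\tdop$-equivariant open immersion of a model of $\Ycal$ into a model of $\overline{\Ucal}$, and then pass to the inverse limit. The paper does this in one stroke: a single application of Proposition~\ref{diagram immersions} yields $\Ycal' \hookrightarrow \overline{\Ucal}'$ open, after which $\zr{\Ycal}{\Ucal}=\zr{\Ycal'}{\Ucal}$ and $\cpt{\Ucal}=\zr{\overline{\Ucal}'}{\Ucal}$, and the open immersion is inherited level by level (any $\Ucal$-admissible ideal on $\Ycal'$ extends to one on $\overline{\Ucal}'$, and conversely restriction of a blow up of $\overline{\Ucal}'$ to the open $\Ycal'$ is a blow up of $\Ycal'$). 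Your version re-runs Proposition~\ref{diagram immersions} for each blow up $\overline{\Ucal}_i$, which is correct but unnecessary.

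One point to tighten: the cofinality of $\{\widetilde{\Ycal_i}\}$ among $\Ucal$-admissible blow ups of $\Ycal$ is not literally ``symmetric'' to the $\overline{\Ucal}$-side, because your index set is blow ups of $\overline{\Ucal}$, not of $\Ycal$. The argument you need is the one the paper is implicitly using: once you have \emph{one} open immersion $\widetilde{\Ycal_0}\hookrightarrow\widetilde{\overline{\Ucal}_0}$, any $\Ucal$-admissible blow up of $\Ycal$ can be dominated (via Proposition~\ref{blow-up domination}) by a blow up of $\widetilde{\Ycal_0}$, and the latter extends across the open immersion to a blow up of $\widetilde{\overline{\Ucal}_0}$. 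That is what actually gives cofinality on the $\Ycal$-side.
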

\begin{proof}
By Proposition \ref{diagram immersions}, after $\Ucal$-admissible blow ups we have  an open embedding $\Ycal' \hra {\overline{\Ucal}}'$, where $\Ycal' \to \Ycal$ and ${\overline{\Ucal}}' \to {\overline{\Ucal}}$ are $\Ucal$-admissible blow ups. By considering the inverse systems giving rise to $\cpt{\Ucal}$ and $\zr{\Ycal}{\Ucal}$, it is clear that we have an embedding $\zr{\Ycal}{\Ucal} \hra \cpt{\Ucal}$. As all the maps are $\tdop$-equiraviant, so is the induced map.
\end{proof}

\begin{prop}
Let $\Ucal \hra \Ycal_1$ and  $\Ucal \hra \Ycal_2$ be $\tdop$-equivariant open immersions into $\tdop$-toric varieties over $\kcirc$, then we have
\[ \langle \Ycal_1 *_\Ucal \Ycal_2 \rangle_\Ucal= \langle \Ycal_1  \rangle_\Ucal \cap  \langle  \Ycal_2 \rangle_\Ucal\]
in $\cpt{\Ucal}$.
\end{prop}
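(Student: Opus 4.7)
The plan is to prove both inclusions inside the ambient locally ringed space $\cpt{\Ucal}$, which by Lemma~\ref{embedcan} contains all three of the Zariski--Riemann spaces $\zr{\Ycal_1}{\Ucal}$, $\zr{\Ycal_2}{\Ucal}$, and $\zr{\Ycal_1 *_\Ucal \Ycal_2}{\Ucal}$ as $\tdop$-equivariant subspaces.

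For the inclusion $\zr{\Ycal_1 *_\Ucal \Ycal_2}{\Ucal} \subset \zr{\Ycal_1}{\Ucal}\cap \zr{\Ycal_2}{\Ucal}$, I would apply Proposition~\ref{diagram immersions} to each of the canonical $\tdop$-equivariant projections $p\colon \Ycal_1 *_\Ucal \Ycal_2 \to \Ycal_1$ and $q\colon \Ycal_1 *_\Ucal \Ycal_2 \to \Ycal_2$. Both restrict to the identity on $\Ucal$, in particular to an open immersion. Hence after $\Ucal$-admissible blow ups of source and target, each projection becomes a $\tdop$-equivariant open immersion. Passing to the inverse limit over the cofiltered systems of $\Ucal$-admissible blow ups, as in the proof of Lemma~\ref{embedcan}, yields $\tdop$-equivariant open embeddings $\zr{\Ycal_1 *_\Ucal \Ycal_2}{\Ucal} \hookrightarrow \zr{\Ycal_i}{\Ucal}$ for $i=1,2$, compatible with the embeddings into $\cpt{\Ucal}$. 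The first inclusion follows.

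For the reverse inclusion, I would fix a proper $\tdop$-toric compactification $\overline{\Ucal}$ and use Proposition~\ref{diagram immersions} to find, for each $i$, a $\Ucal$-admissible blow up $\overline{\Ucal}_i \to \overline{\Ucal}$ and $\Ycal_i' \to \Ycal_i$ together with a $\tdop$-equivariant open immersion $\Ycal_i' \hookrightarrow \overline{\Ucal}_i$. By Proposition~\ref{blow-up domination} the blow ups $\overline{\Ucal}_1,\overline{\Ucal}_2$ are dominated by a common $\Ucal$-admissible blow up $\overline{\Ucal}'$ of $\overline{\Ucal}$, and after possibly further $\Ucal$-admissible blow ups of the $\Ycal_i'$ we may arrange that both $\Ycal_1', \Ycal_2'$ sit as $\tdop$-invariant open subschemes of $\overline{\Ucal}'$. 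The diagonal $\overline{\Ucal}' \hookrightarrow \overline{\Ucal}' \times_\kcirc \overline{\Ucal}'$ is a closed immersion (by separatedness), and its intersection with the open subscheme $\Ycal_1' \times_\kcirc \Ycal_2'$ is identified with $\Ycal_1' \cap \Ycal_2'$. Since $\Ucal$ is dense in each $\Ycal_i'$ and sits inside $\Ycal_1' \cap \Ycal_2'$ via the diagonal, the scheme-theoretic closure of $\Ucal$ in $\Ycal_1' \times_\kcirc \Ycal_2'$ equals $\Ycal_1' \cap \Ycal_2'$ under the diagonal identification; by construction this closure is (a $\Ucal$-admissible blow up of) the join $\Ycal_1 *_\Ucal \Ycal_2$. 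Hence a point $x$ lying in both $\zr{\Ycal_1}{\Ucal}$ and $\zr{\Ycal_2}{\Ucal}$, viewed as a compatible system of points in $\cpt{\Ucal}$, has its image in $\overline{\Ucal}'$ contained in $\Ycal_1' \cap \Ycal_2'$, and therefore defines a point of $\zr{\Ycal_1 *_\Ucal \Ycal_2}{\Ucal}$.

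The main obstacle will be the scheme-theoretic identification in the last step: identifying the join $\Ycal_1 *_\Ucal \Ycal_2$, defined as a closure in a product, with the intersection $\Ycal_1' \cap \Ycal_2'$ inside a common admissible blow up of $\overline{\Ucal}$. This requires separatedness of $\overline{\Ucal}'$ (so that the diagonal is closed), density of $\Ucal$ in each $\Ycal_i'$ (which holds because the schemes are integral and $\Ucal$ is a non-empty $\tdop$-invariant open), and compatibility of the ``closure of $\Ucal$'' operation with strict transforms under $\Ucal$-admissible blow ups, as provided by the local description in \S\ref{strict transform}. Once this identification is in place, the matching of Zariski--Riemann spaces is a routine cofinality argument on the inverse systems of $\Ucal$-admissible blow ups.
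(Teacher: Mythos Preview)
Your argument for the inclusion $\zr{\Ycal_1 *_\Ucal \Ycal_2}{\Ucal} \subset \zr{\Ycal_1}{\Ucal}\cap \zr{\Ycal_2}{\Ucal}$ is exactly the paper's: apply Proposition~\ref{diagram immersions} to each projection of the join, obtain open immersions $\Zcal \hookrightarrow \Ycal_i'$ after $\Ucal$-admissible blow ups, and pass to the limit.

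For the reverse inclusion the paper simply writes ``The other inclusion is clear as we have $\Ucal \subset \Ycal_1 *_\Ucal \Ycal_2$'', with no further justification. Your diagonal argument is a correct way to unpack this: placing suitable blow ups $\Ycal_1',\Ycal_2'$ inside a common $\Ucal$-admissible blow up $\overline{\Ucal}'$ of a proper compactification, the separatedness of $\overline{\Ucal}'$ forces the closure of the diagonal copy of $\Ucal$ in $\Ycal_1' \times_\kcirc \Ycal_2'$ to be the graph of the identity on $\Ycal_1' \cap \Ycal_2'$, and this closure is indeed a $\Ucal$-admissible blow up of the join (the product map $\Ycal_1' \times_\kcirc \Ycal_2' \to \Ycal_1 \times_\kcirc \Ycal_2$ factors as a composition of $\Ucal$-admissible blow ups, so the strict transform of the join is again a $\Ucal$-admissible blow up). Thus a point of $\cpt{\Ucal}$ whose projection to $\overline{\Ucal}'$ lies in both $\Ycal_i'$ automatically lies in $\zr{\Ycal_1 *_\Ucal \Ycal_2}{\Ucal}$. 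This is more explicit than the paper but not a different route; it supplies precisely the scheme-theoretic identification that the word ``clear'' is standing in for.
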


\begin{proof}
By the Proposition \ref{diagram immersions}, there exist  $\Ucal$-admissible blow ups $\Ycal_i' \to \Ycal_i$, $i=1,2$,  such that the induced morphisms $\Zcal \to\Ycal_i'$, $i=1,2$ are open immersions, with $\Zcal \to  \Ycal_1 *_\Ucal \Ycal_2 $ the strict transform. Then we have induced maps $\zr{\Ycal_1*_\Ucal \Ycal_2}{\Ucal} \to \zr{\Ycal_i}{\Ucal}$, $i=1,2$ which are open immersions.  This implies that $ \langle \Ycal_1 * \Ycal_2 \rangle_\Ucal \subset \langle \Ycal_1  \rangle_\Ucal \cap  \langle  \Ycal_2 \rangle_\Ucal $. The other inclusion is clear as we have $\Ucal \subset \Ycal_1 *_\Ucal \Ycal_2$.
\end{proof}

One of the most important propositions for constructing the $\tdop$-equivariant completion of $\Ycal$ is the following. 
\begin{prop}\label{gluing}

Keeping the notation from the last proposition, there exists a $\tdop$-equivariant open immersion $\Ucal \hra \Zcal$ such that
\[ \zr{\Zcal}{\Ucal}=\zr{\Ycal_1}{\Ucal} \cup \zr{\Ycal_2}{\Ucal}\]
\end{prop}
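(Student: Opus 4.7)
The plan is to construct $\Zcal$ by equivariantly gluing modifications of $\Ycal_1$ and $\Ycal_2$ along a common open subscheme, and then to read off the equality of Zariski--Riemann spaces from this gluing.

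First, I apply Proposition \ref{diagram immersions} to the diagram (\ref{diagram blow-ups}) with $\Ycal = \Ycal_1$ and $\Xcal = \Ycal_2$, noting that the hypothesis that $\Ucal \hookrightarrow \Ycal_2$ is an open immersion is satisfied. This produces $\Ucal$-admissible blow ups $\beta_i \colon \Ycal_i' \to \Ycal_i$ for $i=1,2$ together with a $\tdop$-toric variety $\Wcal$---the common strict transform of $\Ycal_1 *_\Ucal \Ycal_2$---which embeds as a $\tdop$-equivariant open subscheme of each $\Ycal_i'$. Under both embeddings the original open subscheme $\Ucal$ is identified with the same open subscheme of $\Wcal$, giving coherent gluing data.

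Next, I glue $\Ycal_1'$ and $\Ycal_2'$ along $\Wcal$ to obtain a locally ringed space $\Zcal$ carrying a natural $\tdop$-action and equivariant open immersions $\Ycal_i' \hookrightarrow \Zcal$. Flatness, finite type and integrality of $\Zcal$ over $\kcirc$ are local and inherited from the $\Ycal_i'$. Composing $\Ucal \hookrightarrow \Ycal_i' \hookrightarrow \Zcal$ yields the required $\tdop$-equivariant open immersion $\Ucal \hookrightarrow \Zcal$. The remaining point, which I expect to be the main obstacle, is to verify that $\Zcal$ is in fact a scheme, i.e.\ that the gluing is separated over $\kcirc$. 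This amounts to showing that the morphism $(i_1, i_2)\colon \Wcal \to \Ycal_1' \times_\kcirc \Ycal_2'$ induced by the two open immersions is a closed immersion. The idea is to identify this with the closed immersion defining the join $\Ycal_1' *_\Ucal \Ycal_2' \hookrightarrow \Ycal_1' \times_\kcirc \Ycal_2'$, using that $\Wcal$ arises as the strict transform of $\Ycal_1 *_\Ucal \Ycal_2$; the last clause of Proposition \ref{diagram immersions} (which guarantees simultaneous open embeddings into the two blow ups) is what makes this identification possible.

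Finally, I establish the equality $\zr{\Zcal}{\Ucal} = \zr{\Ycal_1}{\Ucal} \cup \zr{\Ycal_2}{\Ucal}$ inside $\cpt{\Ucal}$. Since $\beta_i$ is a $\Ucal$-admissible blow up, the inverse system defining $\zr{\Ycal_i'}{\Ucal}$ is cofinal in the one defining $\zr{\Ycal_i}{\Ucal}$, so these spaces agree. As $\Ycal_i' \hookrightarrow \Zcal$ is a $\tdop$-equivariant open immersion, Lemma \ref{embedcan} yields $\zr{\Ycal_i}{\Ucal} \subset \zr{\Zcal}{\Ucal}$ inside $\cpt{\Ucal}$, proving one inclusion. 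For the reverse inclusion, any $\Ucal$-admissible blow up $\Zcal'' \to \Zcal$ restricts over each open $\Ycal_i' \subset \Zcal$ to a $\Ucal$-admissible blow up of $\Ycal_i'$, and since the opens $\Ycal_1'$ and $\Ycal_2'$ cover $\Zcal$, every point of $\zr{\Zcal}{\Ucal}$ necessarily projects into $\Ycal_i'$ for some $i$, hence lies in $\zr{\Ycal_i'}{\Ucal} = \zr{\Ycal_i}{\Ucal}$.
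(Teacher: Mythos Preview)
Your proposal is correct and follows essentially the same route as the paper: apply Proposition \ref{diagram immersions} to obtain $\Ucal$-admissible blow ups $\Ycal_i'\to\Ycal_i$ and a common strict transform of the join (your $\Wcal$, the paper's $\Zcal'$) openly embedded in both, then glue $\Ycal_1'$ and $\Ycal_2'$ along it. The paper is terser---it cites \cite[Lemma F.3.2]{fujiwara-kato14} for the underlying non-equivariant statement and simply asserts the result is a $\tdop$-toric variety---whereas you spell out the separatedness check and the verification of the Zariski--Riemann equality, both of which the paper leaves implicit.
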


\begin{proof}
From \cite[Lemma F.3.2]{fujiwara-kato14} it follows that an open embedding with the required property exists. Explicitly, after $\Ucal$-admissible blow ups  $\Ycal_i' \to \Ycal_i	$, the strict transform $\Zcal' \to \Ycal_1 *_\Ucal \Ycal_2 $ admits open immersions into $\Ycal_i'$, for $i=1,2$. Then $\Zcal$ is given by the gluing of $\Ycal_1'$ and $\Ycal_2'$ along $\Zcal'$. Since $\Ycal_1'$, $\Ycal_2'$ and $\Zcal'$ are $\tdop$-toric varieties, after gluing we end up with a $\tdop$-toric variety as well. It is clear that all the morphisms are compatible with the $\tdop$-action.
\end{proof}
This proposition allow us to glue schemes keeping track of the  $\tdop$-invariant Zariski--Riemann spaces. This will be crucial for the construction of the proper $\tdop$-toric variety containing $\Ycal$ as an open and dense subscheme.

\section{Equivariant compactification}

In this section, we will use the results on $\tdop$-invariant Zariski--Riemann spaces in order to prove that every normal $\tdop$-toric variety can be embedded into a proper $\tdop$-toric variety over $\kcirc$. Here we follow the lines of the proof of Nagata's embedding theorem given by Fujiwara--Kato in \cite[Appendix F]{fujiwara-kato14}, adapted to our setting.  The main idea is to construct first a locally ringed space $\cpt{\Ycal}$ containing $\Ycal$ as an open dense subset, see Definition \ref{cp} below. After that, we construct a $\tdop$-toric variety ${\Ycal}_\te{cpt}$ proper over $\kcirc$ such that $\zr{{\Ycal}_\te{cpt}}{\Ycal}=\cpt{\Ycal}$ as locally ringed spaces.

\art We fix a normal $\tdop$-toric variety $\Ycal$ over $\kcirc$. Let $\Ucal \subset \Ycal$ be a $\tdop$-invariant open affine subset. By \cite[Theorem 1]{gubler_soto13} we know that it is an affine normal $\tdop$-toric variety. We can take its closure $\overline{\Ucal}$ in some projective space over $\kcirc$. 
From \cite[Theorem 2]{gubler_soto13}, we know that  $\Ycal$ admits an affine $\tdop$-invariant open covering, which implies that any $\Ucal$-admissible blow up $\beta: \Ycal' \to \Ycal$  also does. This follows from the fact that $\beta^ {-1}(\Ucal_i)\to \Ucal_i$ is the blow up of $\Ucal_i$ along $\Ical_{|\Ucal_i}$, with $\Ical$ the center of $\beta$, which is $\tdop$-invariant.

Note that from Lemma \ref{embedcan}, it follows that for an affine $\tdop$-invariant open covering $\{\Ucal_i\}$ of $\Ycal$, we have canonical open immersions $\zr{\Ycal}{\Ucal_i} \hra \cpt{\Ucal_i}$.

\begin{definition}\label{cp}

The \emph{partial compactification} of $\Ucal$ relative to $\Ycal$ is
\[ \pc{\Ucal}{\Ycal}:=\cpt{\Ucal} \backslash \overline{\zr{\Ycal}{\Ucal}\backslash \Ucal}.\]
\end{definition}

\begin{remark}
The locally ringed spaces $\langle \Ucal \rangle_{\te{cpt}}$ and $\pc{\Ucal}{\Ycal}$ are endowed with a $\tdop$-action. Clearly one has that $\Ucal \subset  \langle \Ucal \rangle_{\te{cpt}}$ is open and quasi-compact. 
\end{remark}

\begin{lemma}\label{inv:nbd}
For any point $z\in \cpt{\Ucal}$, there is a $\tdop$-invariant open neighborhood $\Wcal \subset \cpt{\Ucal}$ which contains $z$.  
\end{lemma}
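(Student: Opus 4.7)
My plan is as follows. I will project $z$ down to a single normal proper $\tdop$-toric variety sitting at the base of the inverse system defining $\cpt{\Ucal}$, and then pull back a $\tdop$-invariant affine neighborhood there. First, since by the remark following Proposition \ref{diagram immersions} the locally ringed space $\cpt{\Ucal} = \zr{\overline{\Ucal}}{\Ucal}$ is independent of the choice of algebraic compactification of $\Ucal$, I can assume that $\overline{\Ucal}$ itself is a normal projective $\tdop$-toric variety over $\kcirc$. This reduction is carried out by replacing the original compactification $\Ycal_{A,a} \subset \pdop^N_\kcirc$ by its normalization $\Ycal_{\Sigma(A,a)}$ furnished by Proposition \ref{normalization}; because $\Ucal$ is already normal (as an open subscheme of the normal variety $\Ycal$), the equivariant open immersion $\Ucal \hra \Ycal_{A,a}$ lifts to an equivariant open immersion $\Ucal \hra \Ycal_{\Sigma(A,a)}$ into the normalization.

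Once $\overline{\Ucal}$ is assumed normal, I consider the canonical projection $\pi\colon \cpt{\Ucal} = \varprojlim \overline{\Ucal}_i \to \overline{\Ucal}$ onto the initial object of the inverse system of $\Ucal$-admissible blow ups defining the Zariski--Riemann space. This projection is continuous and $\tdop$-equivariant since every transition morphism in the system is equivariant by construction. For the given point $z \in \cpt{\Ucal}$, set $z_0 := \pi(z) \in \overline{\Ucal}$. By the classification theorem recalled in \S 2, $z_0$ admits a $\tdop$-invariant open affine neighborhood $V_0 \subset \overline{\Ucal}$ corresponding to some $\Gamma$-admissible cone.

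The required neighborhood is then $\Wcal := \pi^{-1}(V_0) \subset \cpt{\Ucal}$: it contains $z$, is open by continuity of $\pi$, and is $\tdop$-invariant because $V_0$ is invariant and $\pi$ intertwines the two $\tdop$-actions.

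The main point that must be pinned down is the legitimacy of the reduction to the normal case at the level of $\tdop$-equivariant locally ringed spaces; in other words, one must check that the identification of Zariski--Riemann spaces coming from the independence of compactification is itself $\tdop$-equivariant. This is not an obstacle in practice: it follows from the fact that all the $\Ucal$-admissible blow ups and the comparison morphisms appearing in Proposition \ref{diagram immersions} and in the construction of $\cpt{\Ucal}$ are equivariant by Proposition \ref{blow up extension} and the general discussion of \S 3.
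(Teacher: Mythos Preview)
Your proof is correct and takes a genuinely different route from the paper's. The paper does not normalize the compactification; instead it works at an arbitrary finite level $\overline{\Ucal}'$ of the inverse system, realizes a neighborhood of $z$ as $\pi'^{-1}(\Vcal)$ for some open $\Vcal \supset \Ucal$, and then saturates $\Vcal$ under the action of $T^\circ(K)=\tdop(\kcirc)$ to produce an invariant open $\Vcal'$. You instead replace $\overline{\Ucal}$ by a normal proper compactification via Proposition~\ref{normalization} and pull back a $\tdop$-invariant affine chart supplied by the Sumihiro-type classification of \S 2. Your route buys a cleaner invariance argument, since you never have to justify why saturating under $\kcirc$-points of the torus yields a scheme-theoretically $\tdop$-invariant open. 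The paper's route, on the other hand, avoids both Proposition~\ref{normalization} (which the introduction flags as unnecessary for Theorem~\ref{theorem1}) and the change-of-compactification step, and it also produces an invariant open $\Vcal \supset \Ucal$ at a finite stage---precisely what is quoted in the proof of Proposition~\ref{algebraic}. Your $V_0$ need not contain $\Ucal$, but replacing it by $V_0 \cup \Ucal$ fixes this immediately. One point worth making explicit in your write-up: for $\Ycal_{\Sigma(A,a)}$ to serve as a compactification you need it to be proper over $\kcirc$, i.e.\ that $\Sigma(A,a)$ is complete (and, in the non-discrete case, that its cones satisfy the vertex condition); this is implicit in \S\ref{projective} but not stated there in so many words.
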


\begin{proof}
Recall that $\cpt{\Ucal} = \varprojlim \overline{\Ucal}'$, with $\{\overline{\Ucal}' \}$ an inverse system of $\Ucal$-admissible blow ups. Set $z'=\pi'(z)$, where $\pi': \cpt{\Ucal} \to \overline{\Ucal}'$ is the canonical projection. Let $\Wcal \subset \cpt{\Ucal}$ be an open neighborhood of $z$, then we know from \cite[Proposition 0.2.2.9]{fujiwara-kato14} that there exists an element $\overline{\Ucal}'$ in the inverse system and an open neighborhood $\Vcal \subset \overline{\Ucal}'$ such that $\Wcal ={\pi'}^ {-1}(\Vcal)$. Furthermore, we may assume that $\Ucal \subset \Vcal$. By taking $\Vcal':=\bigcup t\cdot \Vcal$ with $t\in T^\circ(K):=\{t\in T(K)| |t|=1\}$, if necessary, we get an open $\tdop$-invariant subset of $\overline{\Ucal}'$. Therefore, the open $\Wcal'={\pi'}^{-1}(\Vcal')$ is a $\tdop$-invariant neighbourhood of $z$. 
\end{proof}

\begin{prop}
Let $\Ucal_1\subset  \Ucal_2$ be two $\tdop$-invariant open affine subsets of $\Ycal$, then we have a $\tdop$-equivariant open embedding
\[ \pc{\Ucal_1}{\Ycal} \hra \pc{\Ucal_2}{\Ycal}\]
extending the inclusion of $\Ucal_1$ into $\Ucal_2$. 
\end{prop}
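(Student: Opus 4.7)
The plan is to construct a natural $\tdop$-equivariant map $\varphi\colon \cpt{\Ucal_1}\to \cpt{\Ucal_2}$ between canonical compactifications, show that it is an isomorphism over the open locus obtained by deleting the closure of $\Ucal_2\backslash \Ucal_1$, and then verify that $\pc{\Ucal_1}{\Ycal}$ sits inside this locus and maps into $\pc{\Ucal_2}{\Ycal}$.

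First, I would fix a projective $\tdop$-equivariant compactification $\overline{\Ucal_2}$ of $\Ucal_2$. Since $\Ucal_1 \hra \Ucal_2 \hra \overline{\Ucal_2}$ is an open immersion and $\Ucal_1$ is dense in $\overline{\Ucal_2}$ (both contain the dense torus $T$), $\overline{\Ucal_2}$ is also a proper $\tdop$-toric compactification of $\Ucal_1$. By the independence of the canonical compactification from the chosen projective model noted after Lemma \ref{embedcan}, we may take
\[ \cpt{\Ucal_i}= \zr{\overline{\Ucal_2}}{\Ucal_i}, \qquad i=1,2. \]
Every $\Ucal_2$-admissible blow up of $\overline{\Ucal_2}$ is automatically $\Ucal_1$-admissible (its centre, disjoint from $\Ucal_2\supset \Ucal_1$, is disjoint from $\Ucal_1$), so the inverse system defining $\cpt{\Ucal_2}$ is a subsystem of the one defining $\cpt{\Ucal_1}$. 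Passing to the projective limit yields a canonical $\tdop$-equivariant morphism of locally ringed spaces $\varphi\colon \cpt{\Ucal_1}\to \cpt{\Ucal_2}$ extending the open immersion $\Ucal_1\hra \Ucal_2$.

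Next I would show that $\varphi$ restricts to a $\tdop$-equivariant isomorphism
\[ \cpt{\Ucal_1}\,\backslash\,\overline{\Ucal_2\backslash \Ucal_1} \;\xrightarrow{\sim}\; \cpt{\Ucal_2}\,\backslash\,\overline{\Ucal_2\backslash \Ucal_1}. \]
The argument is local: given a point $z$ in the source, Lemma \ref{inv:nbd} supplies a $\tdop$-invariant open neighborhood $\Wcal$ of $z$ in $\cpt{\Ucal_1}$ disjoint from $\overline{\Ucal_2\backslash \Ucal_1}$. On an invariant open subscheme $\Ucal'\subset \overline{\Ucal_2}$ pulled back from $\Wcal$, one has $\Ucal'\cap \Ucal_1=\Ucal'\cap \Ucal_2$, so the classes of $\Ucal_1$-admissible and $\Ucal_2$-admissible blow ups, after restriction to $\Ucal'$, coincide; combining this with the filteredness of admissible blow ups (Proposition \ref{blow-up domination}) and their stability under composition identifies $\cpt{\Ucal_1}|_\Wcal$ with $\cpt{\Ucal_2}|_{\varphi(\Wcal)}$ as locally ringed spaces with $\tdop$-action.

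Finally, the chain $\Ucal_2\backslash \Ucal_1\subset \Ycal\backslash \Ucal_1\subset \zr{\Ycal}{\Ucal_1}\backslash \Ucal_1$ gives $\overline{\Ucal_2\backslash \Ucal_1}\subset \overline{\zr{\Ycal}{\Ucal_1}\backslash \Ucal_1}$ in $\cpt{\Ucal_1}$, so $\pc{\Ucal_1}{\Ycal}$ is contained in the isomorphism locus above, and $\varphi|_{\pc{\Ucal_1}{\Ycal}}$ is a $\tdop$-equivariant open immersion into $\cpt{\Ucal_2}$ extending $\Ucal_1\hra \Ucal_2$. To conclude that its image lies in $\pc{\Ucal_2}{\Ycal}$, I would introduce the parallel canonical morphism $\psi\colon \zr{\Ycal}{\Ucal_1}\to \zr{\Ycal}{\Ucal_2}$ obtained from the same subsystem relation applied to admissible blow ups of $\Ycal$, note that $\psi^{-1}(\zr{\Ycal}{\Ucal_2}\backslash \Ucal_2)\subset \zr{\Ycal}{\Ucal_1}\backslash \Ucal_1$, and pass to closures inside the respective $\cpt{\Ucal_i}$ via the embeddings of Lemma \ref{embedcan}. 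The main obstacle I expect is the local isomorphism step: producing a cofinal family of $\Ucal_2$-admissible blow ups which, after restriction to an invariant neighborhood avoiding $\overline{\Ucal_2\backslash \Ucal_1}$, realize the same limit as the full system of $\Ucal_1$-admissible ones. This is exactly where Lemma \ref{inv:nbd} and Proposition \ref{blow-up domination} are essential.
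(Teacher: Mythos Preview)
Your argument shares the paper's starting point: use a single projective compactification $\overline{\Ucal_2}$ for both $\Ucal_1$ and $\Ucal_2$, note that every $\Ucal_2$-admissible blow up is $\Ucal_1$-admissible, and pass to the limit to obtain the canonical $\tdop$-equivariant morphism $\varphi\colon \cpt{\Ucal_1}\to\cpt{\Ucal_2}$. Where you diverge is in the mechanism for proving $\varphi$ restricts to an open immersion on $\pc{\Ucal_1}{\Ycal}$.

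The paper runs a direct \emph{injectivity analysis}: given $x,y\in\cpt{\Ucal_1}\backslash\Ucal_1$ with $\varphi(x)=\varphi(y)$, it passes to generizations whose common projection lies in $\Ucal_2$, and then invokes Proposition~\ref{blow up extension} to force $x,y\in\zr{\Ucal_2}{\Ucal_1}=\zr{\Ycal}{\Ucal_1}$; hence the non-injectivity locus is removed when forming $\pc{\Ucal_1}{\Ycal}$. You instead prove a \emph{local isomorphism on an explicit locus}: away from $\overline{\Ucal_2\backslash\Ucal_1}$ the two classes of admissible blow ups coincide after restriction (your cofinality step), so $\varphi$ is an isomorphism there; then $\overline{\Ucal_2\backslash\Ucal_1}\subset\overline{\zr{\Ycal}{\Ucal_1}\backslash\Ucal_1}$ places $\pc{\Ucal_1}{\Ycal}$ inside that locus. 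Your route is more structural, avoids the identification $\zr{\Ucal_2}{\Ucal_1}=\zr{\Ycal}{\Ucal_1}$ that the paper uses without proof, and in addition supplies the verification that the image lands in $\pc{\Ucal_2}{\Ycal}$, which the paper's proof omits.

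One point to tighten: the chain $\Ucal_2\backslash\Ucal_1\subset\Ycal\backslash\Ucal_1\subset\zr{\Ycal}{\Ucal_1}\backslash\Ucal_1$ is not literally an inclusion of subsets of $\cpt{\Ucal_1}$, since $\Ycal$ does not embed into $\zr{\Ycal}{\Ucal_1}$ (there is only the projection the other way). What you really need, and what your cofinality argument in fact gives, is that $\varphi^{-1}\bigl(\overline{\Ucal_2\backslash\Ucal_1}\bigr)$ lies in the closure of the fibre of $\zr{\Ycal}{\Ucal_1}\to\Ycal$ over $\Ycal\backslash\Ucal_1$; phrase the containment in those terms.
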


\begin{proof}
Let $\overline{\Ucal_2}$ be a compactification of $\Ucal_2$ over $\kcirc$. By taking the closure of $\Ucal_1$ in $\overline{\Ucal_2}$, we get a compactification of $\Ucal_1$ over $\kcirc$ as well. We have a canonial morphism  $\cpt{\Ucal_1}\xrightarrow{\varphi} \cpt{\Ucal_2}$ extending the inclusion  $\Ucal_1 \hra \Ucal_2$.  
Now let $x,y \in \cpt{\Ucal_1}\backslash \Ucal_1$ be such that $\varphi(x)=\varphi(y)\in \cpt{\Ucal_2}$ and therefore have the same projection on $\overline{\Ucal_2}'\backslash \Ucal_1$, where $\overline{\Ucal_2}'\to \overline{\Ucal_2}$ is a $\Ucal_2$-admissible blow up.  After replacing $x$ and $y$ by generizations of $x$ and $y$ if needed, we may assume that $\pi(x)= \pi(y) \in \overline{\Ucal_1}'\cap \Ucal_2$, where $\pi \colon \cpt{\Ucal_1} \to \overline{\Ucal_1}'$ is the projection and $\overline{\Ucal_1}' \to \overline{\Ucal_1}$ is a $\Ucal_1$-admissible blow up. By Proposition \ref{blow up extension} this implies that $x, y \in \zr{\Ucal_2}{\Ucal_1}= \zr{\Ycal}{\Ucal_1} \subset \cpt{\Ucal_1}$. Therefore the induced map $\pc{\Ucal_1}{\Ycal} \to \cpt{\Ucal_2}$ is an open immersion. 
\end{proof}

Now consider an open covering $\{ \Ucal_i \}$ of the $\tdop$-toric variety $\Ycal$. Given two elements of this covering $\Ucal_i, \Ucal_j$ we have the canonical inclusions $\Ucal_i\cap \Ucal_j \hra \Ucal_i$ and  $\Ucal_i\cap \Ucal_j \hra \Ucal_j$, which by the previous proposition give rise to open embeddings $\pc{\Ucal_i \cap \Ucal_j}{\Ycal}\hra \pc{\Ucal_i}{\Ycal}$ and $\pc{\Ucal_i \cap \Ucal_j}{\Ycal}\hra \pc{\Ucal_j}{\Ycal}$. Therefore we have well defined maps $ \amalg \pc{\Ucal_i \cap \Ucal_j}{\Ycal} \mathrel{\mathop{\rightrightarrows}^{\mathrm{p}}_{\mathrm{q}}} \amalg \pc{\Ucal_i}{\Ycal}$ coming from those inclusions. We define the \emph{$\tdop$-invariant Zariski--Riemann compactification} $\cpt{\Ycal}$  of $\Ycal$ as the cokernel of these maps in the category of locally ringed spaces. We get the exact sequence
\[ \amalg \pc{\Ucal_i \cap \Ucal_j}{\Ycal} \mathrel{\mathop{\rightrightarrows}^{\mathrm{p}}_{\mathrm{q}}} \amalg \pc{\Ucal_i}{\Ycal} \to \cpt{\Ycal}.\]
By construction $\cpt{\Ycal}$ is $\tdop$-equivariant. 

To show that this space is algebraic, we proceed as follows.

\begin{prop} \label{algebraic}
Given a point $z\in \cpt{\Ycal}$, there exists a dense open immersion over $\kcirc$ $\Ycal \hra \Ycal_z$ of $\tdop$-toric varieties over $\kcirc$ such that $\zr{\Ycal_z}{\Ycal}$ contains the point z. 
\end{prop}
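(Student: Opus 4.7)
The plan is to construct $\Ycal_z$ by gluing $\Ycal$ to a suitable algebraic $\tdop$-invariant neighborhood of $z$ coming from a canonical compactification of some affine chart of $\Ycal$, and then to trace $z$ through the resulting inverse systems.

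First, I would use the construction of $\cpt{\Ycal}$ as a cokernel of $\amalg \pc{\Ucal_i \cap \Ucal_j}{\Ycal} \rightrightarrows \amalg \pc{\Ucal_i}{\Ycal}$ to fix an affine $\tdop$-invariant open $\Ucal_i \subset \Ycal$ for which $z$ has a representative in $\pc{\Ucal_i}{\Ycal} \subset \cpt{\Ucal_i} = \zr{\overline{\Ucal_i}}{\Ucal_i}$. Then Lemma \ref{inv:nbd} supplies a $\tdop$-invariant open neighborhood of $z$ in $\cpt{\Ucal_i}$ of the form $\Wcal = \pi^{-1}(\Vcal)$, where $\pi \colon \cpt{\Ucal_i} \to \overline{\Ucal_i}'$ is the projection onto some $\Ucal_i$-admissible blow up $\overline{\Ucal_i}' \to \overline{\Ucal_i}$, and $\Vcal \subset \overline{\Ucal_i}'$ is a $\tdop$-invariant open subscheme containing $\Ucal_i$. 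By Proposition \ref{blow-up is toric}, $\overline{\Ucal_i}'$ is a $\tdop$-toric variety over $\kcirc$, hence so is the open $\Vcal$, and the inclusion $\Ucal_i \hra \Vcal$ is a $\tdop$-equivariant dense open immersion.

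Next, I would apply Proposition \ref{gluing} to the two $\tdop$-equivariant open immersions $\Ucal_i \hra \Ycal$ and $\Ucal_i \hra \Vcal$ to build a $\tdop$-toric variety $\Ycal_z$ over $\kcirc$, into which both $\Ycal$ and $\Vcal$ embed as $\tdop$-invariant open subschemes, satisfying
\[ \zr{\Ycal_z}{\Ucal_i} = \zr{\Ycal}{\Ucal_i} \cup \zr{\Vcal}{\Ucal_i} \]
inside $\cpt{\Ucal_i}$. Density of $\Ycal \hra \Ycal_z$ then follows from the integrality of $\Ycal_z$ and the density of $\Ucal_i$ in $\Ycal$. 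To locate $z$ in $\zr{\Ycal_z}{\Ycal}$, I would invoke Lemma \ref{embedcan}: the image of $\zr{\Vcal}{\Ucal_i}$ in $\cpt{\Ucal_i}$ under the canonical embedding is exactly $\pi^{-1}(\Vcal) = \Wcal$, so $z \in \zr{\Vcal}{\Ucal_i} \subset \zr{\Ycal_z}{\Ucal_i}$. Since $\Ucal_i \subset \Ycal$, every $\Ycal$-admissible blow up of $\Ycal_z$ is in particular $\Ucal_i$-admissible, so there is a natural projection of inverse limits $\zr{\Ycal_z}{\Ucal_i} \to \zr{\Ycal_z}{\Ycal}$, and the image of $z$ under this projection furnishes the required point of $\zr{\Ycal_z}{\Ycal}$.

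The hardest part will be reconciling the gluings and inverse systems at the end. Specifically, I need to verify that the open immersion of $\Ycal$ itself into $\Ycal_z$ emerges from Proposition \ref{gluing} (rather than only of a $\Ucal_i$-admissible modification of $\Ycal$), and then check that the point of $\zr{\Ycal_z}{\Ycal}$ produced by the projection above coincides with the original $z \in \cpt{\Ycal}$ under the natural identification of $\zr{\Ycal_z}{\Ycal}$ as a subspace of $\cpt{\Ycal}$ built from the gluing of the partial compactifications $\pc{\Ucal_i}{\Ycal}$.
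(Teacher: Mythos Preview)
Your approach is essentially the same as the paper's: pick an affine invariant chart $\Ucal_i$ with $z\in\pc{\Ucal_i}{\Ycal}$, use Lemma~\ref{inv:nbd} to produce a $\tdop$-invariant open $\Vcal\subset\overline{\Ucal_i}'$ containing $\Ucal_i$ whose preimage in $\cpt{\Ucal_i}$ is a neighborhood of $z$, and then apply Proposition~\ref{gluing} to glue $\Ycal$ and $\Vcal$ along $\Ucal_i$ to obtain $\Ycal_z$. The technical points you flag in your last paragraph (that $\Ycal$ itself, not merely a $\Ucal_i$-admissible modification, embeds openly in $\Ycal_z$, and that the resulting point in $\zr{\Ycal_z}{\Ycal}$ really is $z$ under the identification with $\cpt{\Ycal}$) are precisely what the paper outsources to \cite[Lemma F.3.3]{fujiwara-kato14}; the paper's own text does not spell them out either.
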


\begin{proof}
This follows from \cite[Lemma F.3.3]{fujiwara-kato14} by noting that from the proof of Lemma \ref{inv:nbd}, we may assume that there is a $\tdop$-invariant open subset $\Vcal \subset \overline{\Ucal}'$ which contains $\Ucal$ and such that $\pi^{-1}(\Vcal)=\Wcal$, where $\pi: \cpt{\Ucal} \to \overline{\Ucal}'$ is the canonical projection and $\overline{\Ucal}'\to \overline{\Ucal}$ is a $\Ucal$-admissible blow up. Hence, $\Ycal_z$ is obtained by gluing $\Ycal$ and $\Vcal$ along $\Ucal$ as in Proposition \ref{gluing}.
\end{proof}

Finally, by using the quasi-compactness of the $\tdop$-invariant Zariski--Riemann space we can prove our main result.

\begin{proof}[Proof of Theorem 1]
For any point $z\in \cpt{\Ycal}$ construct $\Ycal_z$ as in Proposition \ref{algebraic}. As $\cpt{\Ycal}$ is quasi-compact, there exist finitely many points $z_i \in \cpt{\Ycal}$ such that $\{ \zr{\Ycal_{z_i}}{\Ycal}\}$ is an open covering of $\cpt{\Ycal}$.  Then by applying Proposition \ref{gluing}, we get a $\tdop$-toric variety ${\Ycal}_\te{cpt}$ over $\kcirc$ which contains $\Ycal$ and satisfies $\zr{{\Ycal}_\te{cpt}}{\Ycal}=\cpt{\Ycal}$. It follows from \cite[Corollary F.2.13]{fujiwara-kato14} that it is proper over $\kcirc$.
\end{proof}

\bibliographystyle{plain}
\bibliography{soto}

\def\cprime{$'$}
\begin{thebibliography}{10}

\bibitem{conrad07}
Brian Conrad.
\newblock Deligne's notes on {N}agata compactifications.
\newblock {\em J. Ramanujan Math. Soc.}, 22(3):205--257, 2007.

\bibitem{ewald}
G{\"u}nter Ewald.
\newblock {\em Combinatorial convexity and algebraic geometry}, volume 168 of
  {\em Graduate Texts in Mathematics}.
\newblock Springer-Verlag, New York, 1996.

\bibitem{ewaldishida}
G{\"u}nter Ewald and Masa-Nori Ishida.
\newblock Completion of real fans and {Z}ariski-{R}iemann spaces.
\newblock {\em Tohoku Math. J. (2)}, 58(2):189--218, 2006.

\bibitem{fujiwara-kato14}
K.~Fujiwara and F.~Kato.
\newblock Foundations of rigid geometry {I}.
\newblock arXiv:1308.4734v5, preprint 2017.

\bibitem{gkz}
I.~M. Gel{\cprime}fand, M.~M. Kapranov, and A.~V. Zelevinsky.
\newblock {\em Discriminants, resultants, and multidimensional determinants}.
\newblock Mathematics: Theory \& Applications. Birkh\"auser Boston, Inc.,
  Boston, MA, 1994.

\bibitem{gubler12}
Walter Gubler.
\newblock A guide to tropicalizations.
\newblock In {\em Algebraic and combinatorial aspects of tropical geometry},
  volume 589 of {\em Contemp. Math.}, pages 125--189. Amer. Math. Soc.,
  Providence, RI, 2013.

\bibitem{gubler_soto13}
Walter Gubler and Alejandro Soto.
\newblock Classification of normal toric varieties over a valuation ring of
  rank one.
\newblock {\em Doc. Math.}, 20:171--198, 2015.

\bibitem{mumford73}
G.~Kempf, Finn~Faye Knudsen, D.~Mumford, and B.~Saint-Donat.
\newblock {\em Toroidal embeddings. {I}}.
\newblock Lecture Notes in Mathematics, Vol. 339. Springer-Verlag, Berlin,
  1973.

\bibitem{qu}
Z.~Qu.
\newblock Toric schemes over a discrete valuation ring and tropical
  compactifications.
\newblock Ph.D. thesis, University of Texas at Austin (2009).

\bibitem{raynaud}
Michel Raynaud and Laurent Gruson.
\newblock Crit\`eres de platitude et de projectivit\'e. {T}echniques de
  ``platification'' d'un module.
\newblock {\em Invent. Math.}, 13:1--89, 1971.

\bibitem{rohrer11}
Fred Rohrer.
\newblock Completions of fans.
\newblock {\em J. Geom.}, 100(1-2):147--169, 2011.

\bibitem{sumihiro1}
Hideyasu Sumihiro.
\newblock Equivariant completion.
\newblock {\em J. Math. Kyoto Univ.}, 14:1--28, 1974.

\bibitem{sumihiro2}
Hideyasu Sumihiro.
\newblock Equivariant completion. {II}.
\newblock {\em J. Math. Kyoto Univ.}, 15(3):573--605, 1975.

\end{thebibliography}

{\small Alejandro Soto,  Institut f\"ur Mathematik, Goethe-Universit\"at Frankfurt, Robert-Mayer-Strasse 8, D-60325 Frankfurt am Main, soto@math.uni-frankfurt.de }

\end{document}